\theoremstyle{plain}
\newtheorem{Th}{Theorem}[section]
\newtheorem{Lem}[Th]{Lemma}
\newtheorem{Cor}[Th]{Corollary}
\newtheorem{Prop}[Th]{Proposition}
\theoremstyle{definition}
\newtheorem{Rem}[Th]{Remark}
\def\@setemails{%
  \ifnum\theg@author > 1
\mbox{{\itshape E-mail addresses}:\space}{\ttfamily\emails}. \else\mbox{{\itshape E-mail address}:\space}{\ttfamily\emails}. \fi%
}
\DeclareMathOperator{\sech}{sech}
\newcommand{\be} {\begin{equation}}
\newcommand{\ee} {\end{equation}}
\newcommand{\di}{\mbox{dist}}
\newcommand{\m}{\mathfrak{m}}
\newcommand{\ga}{\gamma}
\newcommand{\cbe}{C_{ \mbox{\tiny{BE}}}}
\newcommand{\cbel}{C_{\tiny \mbox{BE}}^{\tiny\mbox{loc}}}
\newcommand{\cber}{C_{\tiny \mbox{BE}}^{\tiny\mbox{rad}}}
\newcommand{\ca}{\mathcal{C}_{\alpha}}
\newcommand{\sg}{S_{\gamma}}
\newcommand{\mg}{\mu^{\gamma}}
\newcommand{\mhs}{\mathcal{M}_{\mbox{\tiny HS}}}
\newcommand{\ug}{U_{\gamma}}
\newcommand{\hug}{\hat{U}_{\gamma}}
\newcommand{\la} {\lambda}
\newcommand{\R} {\mathbb{R}}
\newcommand{\rn}{\R^{N}}
\newcommand{\s} {\mathbb{S}}
\newcommand{\sn}{\s^{N-1}}
\newcommand{\var}{\varepsilon}
\newcommand{\cgn}{C_{\gamma,N}}
\newcommand{\bp}{\beta_{+}(\gamma)}
\newcommand{\hsb}{\frac{\cgn}{\left(|x|^{\frac{2 \bp}{N-2}}+|x|^{\frac{2\beta_{-}(\gamma)}{N-2}}\right)^{\frac{N-2}{2}}}}
\newcommand{\ds}{{\rm d}\sigma}
\newcommand{\intR}{\int_{\R}}
\newcommand{\intr}{\int_{\rn}}
\newcommand{\intc}{\int_{\R \times \sn}}
\newcommand{\lts}{L^{2^{\star}}}
\newcommand{\lt}{L^2}
\newcommand{\authorfootnotes}{\renewcommand\thefootnote{\@fnsymbol\c@footnote}}%
\def\e{{\text{e}}}
\numberwithin{equation}{section} \allowdisplaybreaks
\title[Bianchi-Egnell extremizer for HS]{Existence of  Bianchi-Egnell stability extremizer for the Hardy-Sobolev inequality}
\author[S. Chakraborty]{Souptik Chakraborty}
\author[M. Ghosh]{Monideep Ghosh}
\author[D. Karmakar]{Debabrata Karmakar}
\email{\tt  souptik25@tifrbng.res.in,  monideep@tifrbng.res.in, debabrata@tifrbng.res.in}
 \address[]{Tata Institute of Fundamental Research, Centre For Applicable Mathematics}
 \address[]{Post Bag No 6503, GKVK Post Office,
Sharada Nagar, Chikkabommsandra,
Bangalore 560065,
Karnataka, India}
\keywords{Hardy-Sobolev inequality, Bianchi-Egnell type stability, Critical exponents, Existence of extremizer}
\subjclass[2020]{26D10, 46E35, 49K40, 47J20, 49J20, 49J40}%{35A01, 35A15, 35B09, 35B33}
\begin{document}
\pagenumbering{arabic}
\begin{abstract}
In this article, we prove the best Bianchi-Egnell constant for the Hardy-Sobolev (HS) inequality
\begin{align*}
\cbe(\gamma) \coloneqq \inf_{\tiny{u \ \mbox{not an optimizer}}} \frac{
  \int_{\mathbb{R}^n} \left(|\nabla u|^2 - \frac{\gamma}{|x|^2}u^2\right) \ {\rm d}x - S_{\gamma}\|u\|_{L^{2^{\star}}}^2}{\mbox{dist} (u, \ \mbox{set of optimizers})^2},
\end{align*}
is attained, extending the result of K\"onig \cite{KT-2025} for the classical Sobolev inequality (that corresponds to $\gamma = 0$). One of the main difficulties is that the third eigenspace of the linearized operator may contain only spherical harmonics of degree $1$, and hence, an essential non-vanishing criterion fails \cite{KT-2023}. This non-vanishing criterion is indispensable for proving the best Bianchi-Egnell constant $C_{\mbox{\tiny{BE}}}(\gamma) < C_{\mbox{\tiny{BE}}}^{\mbox{\tiny{loc}}}(\gamma)$ that prevents a minimizing sequence converging to one of the optimizers. In addition, not being translation invariant, extracting a non-zero weak limit from a minimizing sequence presents difficulties. We found another hidden critical level $C_{\mbox{\tiny{BE}}}(\gamma) <1 - \frac{S_{\gamma}}{S},$ where $S$ is the best Sobolev constant that plays a significant role in proving the existence of an extremizer.
In particular, we show that there exists a $\gamma_0>0$ such that for $\gamma \geq \gamma_0,$
 $C_{\mbox{\tiny{BE}}}(\gamma)$ is attained. Moreover, we remark that there is a region $\gamma_0 \leq \gamma < \gamma_c^{\star},$ where the third eigenspace of the linearized operator contains only spherical harmonics of degree $1.$ Our result improves some of the results in Wei-Wu \cite{WeiWu-2024} and Deng-Tian and Wei \cite{Wei25} corresponding to the HS inequality.
 \end{abstract}

\maketitle
{\small\tableofcontents}

\section{Introduction}

The classical Hardy-Sobolev inequality in $\mathbb{R}^N$ asserts that for every $0 <\gamma < \frac{(N-2)^2}{4},$ there exists an optimal constant $S_{\gamma} = S(\gamma, N)$ such that the inequality
\begin{align} \label{HS}
S_{\gamma}\|u\|_{L^{2^{\star}}}^2 \leq \int_{\mathbb{R}^N} \left(|\nabla u|^2 - \frac{\gamma}{|x|^2}u^2\right) \ {\rm d}x, \ \ \ \ 2^{\star} = \frac{2N}{N-2}
\end{align}
holds for every $u \in H^1(\mathbb{R}^N).$ Here and throughout the article $\|u\|_{L^p}$ denotes the $L^p$-norm of $u$ on $\mathbb{R}^N$ with respect to the Lebesgue measure and $H^1(\mathbb{R}^N)$ is defined by the closure of $C_c^{\infty}(\R^N)$ with respect to the norm $\|\nabla u\|_{L^2}$. The quantity $\frac{(N-2)^2}{4}$ is the best Hardy constant. The inequality \eqref{HS} is the interpolation of the classical Sobolev inequality that corresponds to limiting case $\gamma = 0$ with best constant $S = S(\R^N)>0$, and the classical Hardy-inequality corresponding to $S_{\gamma} = 0$ and $\gamma = \frac{(N-2)^2}{4}.$ The inequality \eqref{HS} and all the quantities involved are invariant under the dilations $\lambda^{\frac{N-2}{2}}u(\lambda x).$ It is known from \cite{CC-1993,TS-1996} that for $\gamma >0,$ the equality holds in \eqref{HS} if and only if $u$ belongs to 
\begin{align}
\mathcal{M}_{\mbox{\tiny{HS}}} \coloneqq \{ c U_{\gamma}[\lambda]: c \in \mathbb{R}\backslash \{0\}, \la >0\}, 
\end{align}
where 
\begin{align} \label{HS bubble}
U_{\gamma}(x) =\hsb, \ \ \beta_{\pm}(\gamma) = \frac{N-2}{2} \pm \sqrt{\frac{(N-2)^2}{4} - \gamma},
\end{align}
$\cgn$  is a normalizing constant $\|\ug\|_{L^{2^{\star}}}=1,$ and $U_{\gamma}[\lambda](x) = \lambda^{\frac{N-2}{2}}U_{\gamma}(\lambda x).$
Hence the set of all extremizers $\mathcal{M}_{\mbox{\tiny{HS}}}$ is a two dimensional set of parameters. On the other hand, in the limiting case $\gamma = 0,$ the Sobolev inequality is invariant under dilations as well as translations and its extremizers consists of $(N+2)$-dimensional set of parameters \cite{Aubin, Talenti}
\begin{align}
\mathcal{M}_{\mbox{\tiny{S}}} \coloneqq \{ c U[z, \lambda]: c \in \mathbb{R}\backslash \{0\}, \la >0, z \in\mathbb{R}^N\},
\end{align}
where 
\begin{align} \label{AT bubble}
U(x) =  \frac{C_N}{(1 + |x|^2)^{\frac{N-2}{2}}}, 
\end{align}
with normalization $ \|U\|_{L^{2^{\star}}}=1$ and $U[z,\la](x) = \la^{\frac{N-2}{2}}U(\la(x-z))$, hereafter will be called Aubin-Talenti bubbles. Note that with respect to the above normalization we have for $\gamma = 0, U_{0} = U,$ and $C_{0,N} = C_N.$ 
In the 90's Bianchi and Egnell \cite{BE-1991} improved the Sobolev inequality, by proving that the corresponding deficit functional controls the distance from the set of optimizers in the following sense. We denote, keeping the dependence on $\gamma$ implicit, the deficit functional
\begin{align}
\delta(u) \coloneqq \int_{\mathbb{R}^N} \left(|\nabla u|^2 - \frac{\gamma}{|x|^2}u^2\right) \ {\rm d}x - S_{\gamma}\|u\|_{L^{2^{\star}}}^2
\end{align}
and the distance form the set of optimizers
\begin{align*}
\mbox{dist}(u, \mathcal{M}_{\mbox{\tiny{HS}}})^2 \coloneqq \inf_{c, \lambda} \int_{\mathbb{R}^N} \left(|\nabla (u - cU_{\gamma}[\lambda])|^2 - \frac{\gamma}{|x|^2}(u - cU_{\gamma}[\lambda])^2\right) \ {\rm d}x.
\end{align*}
When $\gamma = 0,$ $\mathcal{M}_{\mbox{\tiny{HS}}}$ would have to be replaced by                                      $\mathcal{M}_{\mbox{\tiny{S}}}$
in the definition of $\mbox{dist}$ and $S_{\gamma}$ would have to be replaced by $S$ in the definition of $\delta(u).$
We know from the work of Bianchi-Egnell \cite{BE-1991} for $\gamma = 0,$ and \cite{WeiWu-2022} for $\gamma >0$ that 
\begin{align} \label{BE constant}
C_{\mbox{\tiny{BE}}}(\gamma) \coloneqq \inf \frac{\delta(u)}{\mbox{dist} (u, \mathcal{M}_{\mbox{\tiny{HS}}})^2} > 0,
\end{align}
where the infimum is taken over all $u \not \in \mathcal{M}_{\mbox{\tiny{HS}}}.$ When $\gamma = 0$ we shall denote the best Bianchi-Egnell constant by $C_{\mbox{\tiny{BE}}}$.
In this article, we are interested in the question of the existence of an optimizer attaining $C_{\mbox{\tiny{BE}}}(\gamma)$ for $\gamma >0.$ For the Sobolev inequality, it has been obtained in the interesting article by K\"onig \cite{KT-2025}. Subsequent developments has been obtained in \cite{WeiWu-2024}, for the Caffarelli-Kohn-Nirenberg inequality. As noticed first by K\"onig and in the subsequent articles, a common theme of this problem is the existence of two critical levels:
\begin{itemize}
\item[(a)] The {\it local Bianchi-Egnell critical level} $C_{\mbox{\tiny{BE}}}^{\mbox{\tiny{loc}}}.$
\item[(b)] The {\it 2-peak critical level} $C_{\mbox{\tiny{BE}}}^{\mbox{\tiny{2-peak}}}.$
\end{itemize}
Consider a minimizing sequence corresponding to $C_{\mbox{\tiny{BE}}}$. From a very crude bound $C_{\mbox{\tiny{BE}}} <1,$ it follows that a minimizing sequence is bounded in $H^1(\R^N)$. Then by classical concentration-compactness principle of Lions \cite{LP1-1984, LP2-1984}, we can assume that (up to translations and dilations) the weak limit is non-zero. In order to prove that the limit attains $C_{\mbox{\tiny{BE}}}$, one needs to show that the convergence is strong in $H^1$ and the limit is not an element of the set of optimizers. It is proved by K\"onig, that the 2-peak critical level detects the loss of strong convergence and the local Bianchi-Egnell critical levels detects the possibility of a minimizing sequence converging to one of the extremizers. Hence if $C_{\mbox{\tiny{BE}}} < \min \{C_{\mbox{\tiny{BE}}}^{\mbox{\tiny{loc}}},  C_{\mbox{\tiny{BE}}}^{\mbox{\tiny{2-peak}}} \} $ then $C_{\mbox{\tiny{BE}}}$ is attained. 
The main idea of proving $C_{\mbox{\tiny{BE}}}<C_{\mbox{\tiny{BE}}}^{\mbox{\tiny{2-peak}}}$ is quite robust in all relevant problems: test $\delta(u)/\mbox{dist} (u, \mathcal{M}_{\mbox{\tiny{HS}}})^2$ against a sum of two suitably chosen ``weakly interacting" bubbles (i.e., their $H^1$-inner product is close to 0).
However, the local Bianchi-Egnell critical level relied on a delicate property of the third eigenfunction space
of the linearized operator
\begin{align*}
-\Delta \rho = \mu_3 U^{2^{\star}-2}\rho, \ \rho \in H^1(\mathbb{R}^N).
\end{align*}
Formally, the idea is as follows, if we test the inequality \eqref{BE constant} against $U + \delta \rho$ and let $\delta \searrow 0,$ a third order expansion of the associated quantities proves that $C_{\mbox{\tiny{BE}}} < C_{\mbox{\tiny{BE}}}^{\mbox{\tiny{loc}}}$, provided the non-vanishing condition $\int_{\mathbb{R}^N} U^{2^{\star}-3}\rho^3 \neq 0$ holds.  As it seems, from our humble understanding, that the structure of the third eigenfunction is an indispensable step towards this goal. For the Sobolev inequality, the existence of a spherical harmonics of degree 2 in the third eigenspace provides the desired non-vanishing criterion. On the other hand, it has been observed in \cite{KT-exp} by K\"onig that for the one-dimensional fractional Sobolev inequality the non-vanishing criterion fails. He even went one step further to prove that $\delta(u)/\mbox{dist} (u, \mathcal{M}_{\mbox{\tiny{S}}})^2 \geq C_{\mbox{\tiny{BE}}}^{\mbox{\tiny{loc}}}$ for $u$ close to $\mathcal{M}_{\mbox{\tiny{S}}}$. This beautiful observation prompted him to conjecture that $C_{\mbox{\tiny{BE}}}=C_{\mbox{\tiny{BE}}}^{\mbox{\tiny{loc}}}$ and the best Bianchi-Egnell constant would not be achieved for the one dimensional fractional Sobolev inequality. In the same spirit, the article \cite{WeiWu-2024}, studied the attainability of the best Bianchi-Egnell constant for the CKN-inequality. It has been observed that for certain ranges of the involved parameters, the non-vanishing condition is satisfied. As a result,  a similar argument can be adopted to this case to prove the existence of a minimizer. However, for the remaining cases, it is known that the third eigenspace contains only the first spherical harmonics and hence it is inconclusive. Even after a fourth order expansion around an optimizer, the resulting expression does not yield the desired strict inequality in local Bianchi-Egnell constant. We will see in a moment that the later scenario is precisely our situation.
Added to these subtleties, extracting a non-zero weak limit from an optimizing sequence is not clear, as the Hardy-Sobolev inequality is not translation invariant. In this regard, we discovered a third critical level for the Hardy-Sobolev inequality: 
 \begin{align}
 C_{\mbox{\tiny{BE}}}(\gamma) < 1 -\frac{S_{\gamma}}{S}.
 \end{align}
 Once found it now seems quite natural but yet it is well hidden and 
 it turns out to be an important ingredient to extract a non-zero weak limit.

\medskip

\noindent
{\bf{Relation to the Caffarelli-Kohn-Nirenberg inequality.}} Consider the following special case of the CKN-inequality (see \cite{CaffarelliKohnNirenberg-1984})
\begin{align*}
S_a \left(\int_{\R^N} |x|^{-a \cdot 2^{\star}} |v(x)|^{2^{\star}} \, {\rm d}x \right)^{\frac{2}{2^{\star}}} \leq \int_{\R^N}|x|^{-2a}|\nabla v(x)|^2\, {\rm d}x,
\end{align*}
where $0 < a < \frac{N-2}{2}.$ The study of the extremals for the full range of CKN-inequality is an extensive volume of work spanning three decades. We refer to the following literatures \cite{Lieb83,CC-1993,CatrinaWang-2001,
FelliSchneider-2003,
DolbeaultEstebanTarantello-2008,
DolbeaultEstebanLossTarantello-2009,
DolbeaultEstebanLoss-2012,
DolbeaultEstebanLoss-2016}. The transformation $u(x) = |x|^{-a}v(x)$ gives (by direct computation or, see for example \cite{CatrinaWang-2001, GhoussoubRobert-2016})
\begin{align*}
\int_{\R^N}|x|^{-2a}|\nabla v(x)|^2 \,{\rm d}x = \int_{\R^N} |\nabla u(x)|^2 \,{\rm d}x - \gamma \int_{\R^N} \frac{|u(x)|^2}{|x|^2} \,{\rm d}x,
\end{align*}
where 
\begin{align*}
\gamma = a(N-2 - a).
\end{align*}
 Note that $\gamma$ is a strictly increasing function of $a,$ and as $a$ varies over $(0, \frac{N-2}{2}), \gamma$  varies over the region $(0, \frac{(N-2)^2}{4}).$ It follows that if $S_{a}$ is optimal in CKN-inequality then it is also optimal in the Hardy-Sobolev inequality, and hence $S_a = S_{\gamma}.$  The Bianchi-Egnell stability and the attainability of its extremizers has been studied in \cite{WeiWu-2022,WeiWu-2024, Wei25} for certain ranges of $a$ mentioned below. However, when restricted to the Hardy-Sobolev inequality, a close inspection at the proof reveals that extraction of a non-zero weak limit from an optimizing sequence has not been proved. As a result, the argument only establishes the attainment of radial best Bianchi-Egnell constant (i.e., the infimum is taken over radial functions). We remark that, for general CKN-inequality as considered in \cite{WeiWu-2024}, (for parameters that does not cover the HS-inequality), the extraction of a non-zero weak limit is a sub-critical problem and hence  arguments of \cite{CatrinaWang-2001} can be applied. Following their notations, we denote
\begin{align*}
a_c = \frac{N-2}{2}, \ \ \qquad \qquad \qquad a_c^{\star} = \left(1 - \sqrt{\frac{N-1}{2N}}\right)a_c.
\end{align*}

Then the Bianchi-Egnell stability holds for all $0< a <\frac{N-2}{2}$, while in \cite{WeiWu-2024} an extremizer of the best radial Bianchi-Egnell constant has been shown to exist only in the region 
\begin{align*}
  a_c^{\star}<a < a_c.
\end{align*}
 We denote, the constant
\begin{align} \label{gammastar}
\gamma_c^{\star} \coloneqq a_c^{\star}(N - 2 - a_c^{\star}) = \frac{N+1}{2N} a_c^2 = \frac{N+1}{2N}\left(\frac{N-2}{2}\right)^2.
\end{align}
Then we know the best radial Bianchi-Egnell constant $\cber(\gamma)$ is attained for $\gamma > \gamma_c^{\star}.$ 
However, we notice below that $\cber(\gamma) \equiv C_{\star},$ a constant, for all admissible $\gamma$ and consequently their result leaves an open space for a substantial improvement.
 
 \medskip
 
 We denote the discrete spectrum of the linearized operator $(-\Delta - \frac{\gamma}{|x|^2})/U_{\gamma}^{2^{\star}-2}$ by $\{\mu_k^\gamma\}_{k=1}^{\infty},$ and 
 \begin{align*}
 \Lambda(\gamma) = 1 - \frac{\mu_2^{\gamma}}{\mu_3^{\gamma}}  =: C_{\mbox{\tiny{BE}}}^{\mbox{\tiny{loc}}}(\gamma)
 \end{align*}
 the spectral gap. In \cite[Proposition~3.1]{WeiWu-2024} the authors calculated the explicit values of $\Lambda(\gamma)$ for all admissible $\gamma$ (see \eqref{lambda_gamma} below) and also showed that 
  that  $\cber(\gamma) < \Lambda(\gamma)$ for $\left[\gamma_c^{\star},\frac{(N-2)^2}{4}\right).$ It follows from the explicit expression that $\Lambda(\gamma)$ is continuous, strictly increasing for $ 0 < \gamma < \gamma_c^{\star},$ and constant for $\gamma \geq \gamma_c^{\star}.$ Whereas $\cber(\gamma)$ is constant throughout. We define

\begin{align} \label{thegamma}
\gamma_0 = 
\begin{cases}
 \mbox{the point $\Lambda(\gamma_0) = \cber(\gamma_0)$} \ \ \ \ \mbox{if} \ N \geq 4 \\
\left(1-\left(\frac{N}{N+4}\right)^{\frac{N}{N-1}}\right)\frac{(N-2)^2}{4}, \ \ \ \ \mbox{if} \ N =3.
\end{cases}
\end{align}
As a result $\cber(\gamma) < \Lambda(\gamma)$ for all $\gamma_0 < \gamma < \frac{(N-2)^2}{4}.$
Note that for $N \geq 4$, $\gamma_0 < \gamma_c^{\star},$ but for $N =3,$ $\gamma_0 > \gamma_c^{\star}.$  

\medskip

By analysing the asymptotic behaviour of $\Lambda(\gamma)$ and the $2$-peak level, the authors in \cite{Wei25} able to show that the Bianchi-Egnell extremizer is attained for a slightly larger region. We know that (see section \ref{twopeak})  the $2$-peak level lies strictly below $\Lambda(\gamma)$ for dimension $N \geq 7.$ Let $\gamma_1$ denote the point of their intersection. In \cite{Wei25}, it is shown that $\cbe(\gamma)$ is attained for $\gamma > \gamma_1$. Again we remark that, the authors studied it for the general CKN inequality, however, when restricted to the HS-inequality, the extraction of non-zero has not been shown. Since $\cber(\gamma) < 2$-peak level, we notice that $\gamma_0 < \gamma_1.$  The main result of this article is the following

\begin{Th}\label{main2} 
 Let $N \geq 3,$ and $\gamma_0$ be defined as in \eqref{thegamma}, then the best Bianchi-Egnell constant $\cbe(\gamma)$ is attained for every $\gamma_0 \leq \gamma < \frac{(N-2)^2}{4}$.  
\end{Th}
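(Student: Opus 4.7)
The plan is to follow the three-level paradigm introduced by K\"onig and to establish the strict inequalities
\begin{align*}
\cbe(\gamma) \;<\; \min\Bigl\{\,\cbel(\gamma),\; C_{\mbox{\tiny BE}}^{\mbox{\tiny 2-peak}}(\gamma),\; 1-\tfrac{\sg}{S}\,\Bigr\},
\end{align*}
and then to show that any minimizing sequence for \eqref{BE constant} must, after a suitable dilation, converge strongly to a non-optimizer. The three critical levels correspond respectively to (i) a minimizing sequence converging to a single HS bubble, (ii) splitting into two asymptotically weakly interacting bubbles, and (iii) losing mass to a translating Aubin-Talenti bubble drifting to spatial infinity. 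Each of these failure modes yields a specific asymptotic value of the Bianchi-Egnell quotient, and beating all three simultaneously forces attainment.

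\textbf{The three upper bounds.} For $\cbe(\gamma)<\cbel(\gamma)=\Lambda(\gamma)$, I would combine the trivial monotonicity $\cbe(\gamma)\le \cber(\gamma)$ with the strict gap $\cber(\gamma)<\Lambda(\gamma)$ for $\gamma>\gamma_0$ established in \cite{WeiWu-2024}; at the borderline $\gamma=\gamma_0$ one has to break radial symmetry and appeal instead to the third critical level. For the two-peak bound I would insert $\ug[\la_1]+\ug[\la_2]$ with $\la_2/\la_1\to\infty$ (or an HS bubble together with a far-translated Aubin-Talenti bubble of comparable mass) into the Bianchi-Egnell quotient and carry out the standard asymptotic expansion, the weak interaction appearing as a controlled positive perturbation in both $\delta$ and $\mbox{dist}^2$. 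For the new level $\cbe(\gamma)<1-\sg/S$, the natural test is $\ug[\la_0]+\epsilon U[z,\la]$ with $|z|\to\infty$: in this regime the Hardy potential acting on the translated Aubin-Talenti bubble becomes negligible and the deficit reduces asymptotically to the Sobolev deficit of $\epsilon U[z,\la]$ measured against $\sg$ rather than $S$, yielding exactly the ratio $1-\sg/S$.

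\textbf{Compactness of minimizing sequences.} Given a minimizing sequence $\{u_n\}$, I would normalize $\|u_n\|_{\lts}=1$ and fix a dilation so that the concentration scale of $|u_n|^{2^\star}$ is of order one. Weak $H^1$-limit $u_\infty$ exists; the crux is to prove $u_\infty\not\equiv 0$. A Hardy-Sobolev-adapted profile decomposition (in the spirit of Struwe, Smets, and Felli--Schneider) shows that if $u_\infty\equiv 0$ then the mass is carried by either dilation bubbles of HS-type centered at the origin (excluded by the chosen scale normalization) or translating Aubin-Talenti bubbles drifting to infinity; in the latter case the quotient converges to $1-\sg/S$, contradicting the third upper bound. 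With $u_\infty\neq 0$ in hand, writing $u_n=u_\infty+w_n$ and applying Brezis-Lieb, any failure of strong convergence $w_n\to 0$ would produce a two-bubble profile and drive the ratio to $C_{\mbox{\tiny BE}}^{\mbox{\tiny 2-peak}}(\gamma)$, ruled out by the second upper bound. Finally, if the strong limit $u_\infty$ lay in $\mhs$, a second-order expansion around the HS bubble in the directions orthogonal to the tangent space of $\mhs$ yields $\delta(u_n)/\mbox{dist}(u_n,\mhs)^2\to\mu_3^{\gamma}/\mu_2^\gamma\cdot\text{(correction)}\ge\Lambda(\gamma)$, contradicting the first upper bound.

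\textbf{Main obstacle.} The delicate point is twofold. First, the loss of translation invariance makes non-triviality of the weak limit genuinely subtle, and this is what motivates the discovery and exploitation of the hidden critical level $1-\sg/S$. Second, in the regime $\gamma<\gamma_c^{\star}$ the third eigenspace of the linearized operator contains only spherical harmonics of degree $1$, so the classical non-vanishing criterion $\int_{\rn}\ug^{2^\star-3}\rho^3\neq 0$ fails and the usual third-order perturbation around an HS bubble is inconclusive. The strict inequality $\cbe(\gamma)<\cbel(\gamma)$ must therefore be obtained globally via $\cbe(\gamma)\le\cber(\gamma)$ together with the explicit comparison of $\cber$ and $\Lambda$ from \cite{WeiWu-2024}, and it is precisely this route that forces the threshold $\gamma_0$ to appear and restricts the theorem to the range $\gamma_0\le\gamma<\frac{(N-2)^2}{4}$.
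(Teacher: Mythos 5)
Your overall strategy matches the paper -- three critical levels, beat all three, then concentration-compactness -- and you correctly identify both obstacles: loss of translation invariance, and failure of the non-vanishing criterion in the regime $\gamma < \gamma_c^{\star}$. However, there are two places where your plan diverges from the paper, one of which is a genuine gap.

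\textbf{The gap: strictness of the hidden level.} You propose to prove $\cbe(\gamma)<1-\tfrac{\sg}{S}$ by testing against $\ug[\la_0]+\epsilon\, U[z,\la]$ as $|z|\to\infty$. This only yields $\cbe(\gamma)\le 1-\tfrac{\sg}{S}$: a direct computation (the one the paper essentially performs in the proof of Proposition~\ref{2-peak-key-est} type estimates) shows that the Bianchi--Egnell quotient of this test family equals $1-\tfrac{\sg}{S}\cdot\frac{(1+\epsilon^{2^{\star}})^{2/2^{\star}}-1}{\epsilon^2}$ in the limit, and the elementary fact that $\eta\mapsto\frac{(1+\eta^p)^{2/p}-1}{\eta^2}$ increases from $0$ to $1$ shows the quotient sits strictly \emph{above} the target for every finite $\epsilon$. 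If $\cbe(\gamma)=1-\tfrac{\sg}{S}$, a minimizing sequence may drift to spatial infinity and the infimum simply fails to be attained, so equality is fatal. The paper obtains the needed strict inequality not by a test function but through the chain $\cbe(\gamma)\le\cber(\gamma)\le\Lambda(\gamma)<1-\tfrac{\sg}{S}$, where the last inequality is the content of Lemma~\ref{cbel less hidden level}, an explicit elementary comparison of the spectral gap $\Lambda(\gamma)$ with $\theta^{1+2/2^{\star}}$. In fact for $N=3$ it is precisely this comparison that \emph{defines} $\gamma_0$, whereas you attribute the threshold entirely to the $\cber$ versus $\Lambda$ step; that is correct only for $N\ge 4$.

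\textbf{A different route for non-vanishing of the weak limit.} Where you invoke a Struwe--Felli--Schneider type profile decomposition adapted to the Hardy--Sobolev setting, the paper (Lemma~\ref{Non-zero weak limit}) proceeds more directly: lift to the cylinder $\R\times\sn$, expand in spherical harmonics, and observe via the improved Hardy inequality (Lemma~\ref{Imp-Har}) that if all low spherical-harmonic modes vanish in $L^{2^{\star}}$ then the quotient is forced up to at least $1-\tfrac{\sg}{\bigl(1-\frac{\gamma}{(N-2)^2/4+k_0(N-2+k_0)}\bigr)S}$, contradicting $\cbe(\gamma)<1-\tfrac{\sg}{S}$ for $k_0$ large. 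A surviving low mode is then handled by a one-dimensional Gagliardo--Nirenberg--Sobolev inequality and Lions' lemma in the axial variable of the cylinder, producing a nonzero weak limit up to $\R$-translation (equivalently, up to dilation in $\R^N$). This avoids building a full profile decomposition compatible with the broken translation symmetry, which is nontrivial to set up rigorously; the paper's argument is more elementary and entirely self-contained. Your description of the subsequent steps -- Brezis--Lieb splitting $u_n=u_\infty+w_n$, exclusion of bubbling via the two-peak level, and exclusion of $u_\infty\in\mhs$ via the local level -- agrees with the paper (modulo the cosmetic point that $\Lambda(\gamma)=1-\mu_2^\gamma/\mu_3^\gamma$, not $\mu_3^\gamma/\mu_2^\gamma$).
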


 The quantitative stability of geometric inequalities is a vast and rapidly expanding subject. It is out of the scope of this article to list all the references. For readers' interests, we tried to refer to the following small list of articles from the extensive archive. The quantitative stability of Sobolev \cite{Bl-1985,BE-1991, CFM09, ChenFrankWeth-2013, Ruffini-2014,  FigalliNeumayer-2019, Neumayer-2020, FigalliZhang-2022}, Caffarelli-Kohn-Nirenberg \cite{RuzhanskyMichaelSuraganYessirkegenov-2018, WeiWu-2022, Nitti, Zou, Lu_CKN}, on the best constant of $\cbe$ see \cite{DolbeaultEstebanFigalliFrankLoss-2025, Lu_Adv}, quantitative stability on manifolds \cite{EngelsteinNeumayerSpolaor-2022, NobiliViolo-2024, hyp25}, degenerate stability \cite{Frank_deg, FrankPeteranderl-2024, Zou_deg}, regarding constructive stability we refer to \cite{BonforteDolbeaultNazaretSimonov-2020, BDNS-2023}.
 We also refer to the expository articles \cite{Figalli_exp, Frank_exp, Dolb_exp, Dolb_exp2} for a detailed and quiet exhaustive exposition of the stability of geometric inequalities.

 The structure of the article is as follows: Section \ref{notations section} presents the notations and preliminary concepts. Section \ref{hiddenlevel} discusses the spectral gap and the hidden critical level, followed by an analysis of the $2$-peak level in Section  \ref{twopeak}. In Section \ref{proofs}, we provide the proof of Theorem \ref{main2}. Finally, Appendix (Section \ref{appendix}) includes several known results that are used throughout the article.

\section{Notations and preliminaries} \label{notations section}

We fix the notations and make a list of important constants used through out the article.
We denote
\begin{align} \label{constant1}
\var = \sqrt{\frac{(N-2)^2}{4} - \gamma}, \ \ \mbox{and} \ \ \theta = \frac{2}{N-2}\var \in (0,1)
\end{align}
where recall 
\begin{align*}
0 < \gamma < \frac{(N-2)^2}{4} \ \ \ \mbox{is the Hardy-Sobolev spectral parameter}.
\end{align*}

We also introduce the function   
\begin{align} \label{halpha}
h_{\alpha}(t)= \sech^{\frac{N-2}{2}}(\alpha t), \ \ \ \ \mbox{for} \  \alpha>0, \ t \in \R,
\end{align}
and its $L^{2^{\star}}$-scale invariant form
$$
g_{\alpha}(t) = \alpha ^{\frac{1}{2^{\star}}} h_{\alpha}(t),
$$ 
so that $\|g_{\alpha}\|_{L^{2^{\star}}(\R)}$ is independent of $\alpha$.
Another important constant relating $S_{\gamma}$ and $S$ frequently used in this article is 
$$
\ca= \left[\left(\frac{N-2}{2}\right)^2+\frac{N-2}{2}\right]\alpha^2, \ \ \ \mbox{for} \  \alpha >0.
$$
Note that by definition, $\ca = \mathcal{C}_1\alpha^2$ holds. The importance of this constant is indicated in Lemma \ref{CompareSg/S} below. This constant appears in the ODE satisfied by $h_{\alpha}$ (see Appendix, Lemma \ref{O.D.E}).

\subsection{Isometric lifting on the cylinder} \label{isometric lifting}
We refer to  \cite[Section1]{RF-2017} for detailed explanation on the topics discussed in this subsection.

\medskip

For $u \in C_c^{\infty}(\rn),$ define $\hat u \in C_c^{\infty}(\R\times\sn)$ by 
\begin{align*}
\hat{u}(t,\sigma)=\e^{-\frac{(N-2)}{2}t}u(\e^{-t}\sigma).
\end{align*}
Then 
\begin{align*}
\|\nabla u\|_{L^2(\mathbb{R}^N)}^2 = \int_{\mathbb{R} \times \mathbb{S}^{N-1}} \left(|\partial_t \hat{u}|^2 +|\nabla_{\mathbb{S}^{N-1}}\hat{u}|^2 + \frac{(N-2)^2}{4} |\hat{u}|^2 \right)\, {\rm d}t\, {\rm d}\sigma
\end{align*}
and hence $u \mapsto \hat u $ is an isometric isomorphism between the Sobolev spaces $H^1(\rn)$ and $H^1(\R \times \sn),$ where the later space is defined by the completion of $C_c^{\infty}(\R \times\sn)$ with respect to the norm on the right hand side of the above equality, and ${\rm d}\sigma$ is the unnormalized surface measure on $\sn$. The transformation preserves the $L^{2^{\star}}$-norm, while the Hardy-term changes to 
\begin{align*}
\int_{\mathbb{R}^N} \frac{|u|^2}{|x|^2} \,{\rm d}x = \int_{\R\times\sn} |\hat{u}|^2 \,{\rm d}t\,{\rm d}\sigma.
\end{align*}

Moreover, the Laplacian on $\R^N$ decomposes as 
$$
(\Delta u)(e^{-t}\sigma) = e^{\frac{N+2}{2}t}\left(\partial_{tt} + \Delta_{\sn}-\frac{(N-2)^2}{4}\right)\hat{u}(t,\sigma).
$$

The Hardy-Sobolev inequality, in cylindrical co-ordinates transforms to 
\begin{align}\label{Sobolev norms}
S_{\ga} \left(\int_{\mathbb{R}\times \mathbb{S}^{N-1}}|\hat{u}|^{2^{\star}} \,{\rm d}t\, {\rm d}\sigma\right)^{\frac{2}{2^{\star}}} \leq \int_{\mathbb{R}\times \mathbb{S}^{N-1}} \left( |\partial_t\hat{u}|^2 + |\nabla_{\mathbb{S}^{N-1}}\hat{u}|^2 + \var^2|\hat{u}|^2\right)\, {\rm d}t\,{\rm d}\sigma,
\end{align}
for all $\hat{u} \in H^1(\R \times \sn)$. For simplicity of notations, we denote the norm on the right hand side of \eqref{Sobolev norms} by $\|\hat{u}\|_{\var}^2,$ and the inner-product inducing this norm by $\langle \cdot, \cdot \rangle_{\var}.$ As a result, 
\begin{align*}
\langle \hat{u}, \hat{v} \rangle_{\var} = \int_{\R^N} \left(\nabla u \cdot \nabla v - \frac{\gamma}{|x|^2}uv\right) \,{\rm d}x =: \langle u, v \rangle_{\gamma}
\end{align*}
holds. When $\var=\frac{N-2}{2}$ (i.e., when $\gamma = 0$), we symbolize the norm by $\|\cdot\|_{H^1(\R\times\sn)}$ and the inner product by $\langle\cdot ,\cdot\rangle_{H^1(\R\times\sn)}$. Note that if $u \in H^1_{\mbox{\tiny{rad}}}(\R^N)$ then $\hat{u}$ is independent of $\sigma$ variable and both $\hat{u}, \hat{u}^{\prime}$ are square integrable. We designate this by saying $\hat{u} \in H^1(\R).$ For this reason, we will denote 
$\widehat{H^1_{\mbox{\tiny{rad}}}}(\R^N)$ by $H^1(\R).$ 

\medskip

\medskip

\subsection{The optimizers in cylindrical co-ordinate}
Since both $U_{\gamma},\, U$ are radial, under the above transformation, both $\hat U_{\gamma}$ and $\hat U$ are independent of the $\sigma$-variable. In addition,
\begin{itemize}
\item[$\bullet$] The Hardy-Sobolev optimizer $\ug$ transforms to 

\begin{align*}
\hug(t) \coloneqq \e^{-\frac{N-2}{2}t}\ug(e^{-t}\sigma) 
=2^{-\frac{N-2}{2}}\cgn\sech^{\frac{N-2}{2}}(\theta t) \ \mbox{where recall} \ \theta=\frac{2}{N-2}\var.
\end{align*}
\item[$\bullet$] The Aubin-Talenti bubble $\hat U$ corresponds to $\theta = 1$ (or $\gamma = 0$),
\begin{align*}
\hat U(t) = 2^{-\frac{N-2}{2}}C_N\sech^{\frac{N-2}{2}}(t)
\end{align*}
where we recall from the introduction that $C_{0,N} = C_N$ (defined in \eqref{HS bubble} and \eqref{AT bubble})
and both respects the normalizations $\|\hug\|_{L^{2^{\star}}(\R\times\sn)}=1=\|\hat{U}\|_{L^{2^{\star}}(\R\times\sn)}$.
\end{itemize}

\medskip
From \eqref{halpha} and \eqref{constant1} we see that $\hat{U}_{\gamma}$ is a constant multiple of $h_{\theta}$ (and hence $g_{\theta}$) and $\hat{U}$ is a constant multiple of $h_1$(and hence $g_1$).

\medskip

$\bullet$ The dilation parameter $\la$  now transforms to the translation parameter $s = -\ln \la$ in this new co-ordinates.  We denote the set of Hardy-Sobolev extremizers in cylindrical co-ordinates by 
\be\label{HS-Ext-Cyl}
\hat{\mathcal{M}}_{\mbox{\tiny{HS}}} \coloneqq \left\{c\hat{U}_{\gamma}[s] \ : \ (c,s)\in\R\times\R,\, \|\hat{U}_{\gamma}[s]\|_{L^{2^{\star}}(\R\times\sn)}=1\right\},
\ee
where $\hat{U}_{\gamma}[s](\cdot) \coloneqq \hat{U}_{\gamma}(\cdot + s),\, s \in \R.$ In addition, 
\begin{eqnarray*}
	\di(u,\mhs)^2 = \inf_{(c,\lambda)\in \R\times \R^{+}}  \left\|u-cU_{\gamma}[\lambda]\right\|_{\gamma}^2 = 
	 \inf_{(c,s)\in \R\times \R} \left\|\hat{u}-c\hat{U}_{\gamma}[s]\right\|_{\var}^2
 =: \di (\hat{u},\hat{\mathcal{M}}_{\mbox{\tiny{HS}}})^2,
\end{eqnarray*}
holds.

\medskip

\medskip

Next we prove the following well known lemma (see for example, \cite{DolbeaultEstebanLossTarantello-2009, GhoussoubRobert-2016}, Beckner,W. (arXiv:0907.3932v1)) comparing the values $\sg$ and $S$. 

\begin{Lem}\label{CompareSg/S}
$S_{\gamma}= \theta^{1+\frac{2}{2^{\star}}}S$. 
\end{Lem}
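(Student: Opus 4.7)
The plan is to work in cylindrical coordinates, where both $\hug$ and $\hat{U}$ are radial (independent of $\sigma\in\sn$) and explicitly known, and to reduce the identity to a one-dimensional rescaling. Since $\sg$ is attained at $\hug$ and $S$ is attained at the (radial) Aubin--Talenti bubble $\hat{U}$, I will use
\[
\sg \;=\; \frac{\|\hug\|_{\var}^2}{\|\hug\|_{L^{2^{\star}}(\R\times\sn)}^2}, \qquad S \;=\; \frac{\|\hat{U}\|_{H^1(\R\times\sn)}^2}{\|\hat{U}\|_{L^{2^{\star}}(\R\times\sn)}^2}.
\]
The key observation is that $\hug$ and $\hat{U}$ are positive constant multiples of $h_{\theta}$ and $h_{1}$, respectively, and $h_{\theta}(t)=h_{1}(\theta t)$; these proportionality constants cancel inside each ratio, so it is enough to compare the two ratios with $h_{\theta}$ and $h_{1}$ in place of $\hug$ and $\hat{U}$.

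Next, I will carry out the substitution $s=\theta t$ in the relevant one-dimensional integrals to record
\[
\int_{\R}|h_{\theta}'|^2\,dt=\theta\!\int_{\R}|h_{1}'|^2\,ds,\quad \int_{\R}h_{\theta}^2\,dt=\theta^{-1}\!\int_{\R}h_{1}^2\,ds,\quad \int_{\R}h_{\theta}^{2^{\star}}\,dt=\theta^{-1}\!\int_{\R}h_{1}^{2^{\star}}\,ds.
\]
Combining the first two with the identity $\var^2=\theta^2(N-2)^2/4$ yields
\[
\int_{\R}\bigl(|h_{\theta}'|^2+\var^2 h_{\theta}^2\bigr)\,dt \;=\; \theta\int_{\R}\Bigl(|h_{1}'|^2+\tfrac{(N-2)^2}{4}h_{1}^2\Bigr)\,ds,
\]
while the $L^{2^{\star}}$-relation gives $\|h_{\theta}\|_{L^{2^{\star}}(\R)}^2=\theta^{-2/2^{\star}}\|h_{1}\|_{L^{2^{\star}}(\R)}^2$. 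Integration in the $\sigma$-variable contributes a common factor $|\sn|^{1-2/2^{\star}}$ to both ratios, and this factor cancels in the comparison between $\sg$ and $S$.

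Assembling these scalings, the ratio that defines $\sg$ differs from the one that defines $S$ exactly by the factor $\theta\cdot\theta^{2/2^{\star}}=\theta^{1+2/2^{\star}}$, which is the claimed identity. The argument is essentially a bookkeeping of scaling factors and presents no substantive obstacle; the only thing to watch is the tracking of three independent $\theta$-contributions, one coming from the $t$-derivative, one from the zeroth-order term through $\var$, and one from the $L^{2^{\star}}$-normalization.
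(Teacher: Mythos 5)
Your proof is correct and follows essentially the same strategy as the paper: both pass to cylindrical coordinates, observe that the optimizers $\hat U_\gamma$ and $\hat U$ are constant multiples of $h_\theta$ and $h_1$, and compare the two Rayleigh quotients by a one-dimensional rescaling, with the $\sigma$-integration contributing a common factor of $|\sn|^{1-2/2^\star}$ that cancels. The only difference is tactical: the paper first invokes the ODE $-h_\alpha'' + \bigl(\tfrac{N-2}{2}\bigr)^2\alpha^2 h_\alpha = \ca h_\alpha^{2^\star-1}$ (Lemma \ref{O.D.E}) to replace the energy in the numerator by $\ca\|h_\alpha\|_{L^{2^\star}(\R)}^{2^\star}$, and then combines $\mathcal{C}_\theta=\theta^2\mathcal{C}_1$ with the $L^{2^\star}$-scaling; you instead substitute $s=\theta t$ directly into the gradient and potential terms, picking up the factors $\theta$ and $\theta^{-1}$ that recombine via $\var^2=\theta^2(N-2)^2/4$ to give an overall $\theta$, and a separate $\theta^{-2/2^\star}$ from the $L^{2^\star}$-norm. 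Your route is slightly more self-contained in that it avoids appealing to the ODE lemma; the paper's route has the advantage of reusing $\ca$, which is already needed elsewhere.
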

\begin{proof}
We have seen that constant multiple of $h_1$ is the Sobolev minimizer whereas a constant multiple of $h_{\theta}$ is a Hardy-Sobolev minimizer on the cylinder. Therefore, by homogeneity
\begin{align*}
\frac{S}{|\sn|^{1-\frac{2}{2^{\star}}}}=\frac{\intR |h_{1}'|^2+\frac{(N-2)^2}{4} |h_{1}|^2}{(\intR h_{1}^{2^{\star}})^{\frac{2}{2^{\star}}}}
=\mathcal{C}_{1}\|h_{1}\|_{\lts(\R)}^{2^{\star}(1-\frac{2}{2^{\star}})}
\end{align*}
 whereas, by Lemma \ref{O.D.E} and \eqref{constant1}
\begin{align*}
\frac{\sg}{|\sn|^{1-\frac{2}{2^{\star}}}}&=\frac{\intR |h_{\theta}'|^2+\var^2 |h_{\theta}|^2}{(\intR h_{\theta}^{2^{\star}})^{\frac{2}{2^{\star}}}}
=\mathcal{C}_{\theta}\|h_{\theta}\|_{\lts(\R)}^{2^{\star}(1-\frac{2}{2^{\star}})}
=\frac{\mathcal{C}_{\theta}}{\theta^{1-\frac{2}{2^{\star}}}}\|h_{1}\|_{\lts(\R)}^{2^{\star}(1-\frac{2}{2^{\star}})}
\end{align*}
where $|\sn|$ denotes the surface area of $\sn.$ Using the relation $\mathcal{C}_{\theta}=\mathcal{C}_1\theta^2$, we conclude the proof. 
\end{proof}

In order to effectively manipulate the distance from the set of optimizers, as in \cite{KT-2025} we define 
\be\nonumber
  \m (u) \coloneqq \sup_{\la >0}\left(\intr \ug[\la]^{2^{\star}-1}u\ {\rm d}x\right)^2.
\ee
Then we can reformulate $\mbox{dist}(u,\mathcal{M}_{\mbox{\tiny HS}})$ in terms of $\mathfrak{m}(u)$.

\begin{Lem}\label{dofm}
	For every $u \in H^1(\R^{N})$
	\be\label{ditm}
	\mbox{dist}(u,\mathcal{M}_{\mbox{\tiny HS}})^2 = \|u\|_{\gamma}^2-S_{\ga}\mathfrak{m}(u)	\ee
 and $\mbox{dist}(u,\mathcal{M}_{\mbox{\tiny HS}})$ is achieved. 
 
 \medskip
 
   In addition, if $\mathfrak{m}(u)>0$, then  the function $\left(\int_{\rn}uU_{\gamma}[\lambda]^{2^{\star}-1}\,{\rm d}x\right)U_{\gamma}[\lambda]$ optimizes $\di(u,\mathcal{M}_{\mbox{\tiny HS}})$ if and only if $U_{\gamma}[\lambda]\in \mathcal{M}_{\mbox{\tiny HS}}$ optimizes $\mathfrak{m}(u)$.
\end{Lem}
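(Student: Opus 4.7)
The approach is to expand $\|u - cU_\gamma[\lambda]\|_\gamma^2$ as an explicit quadratic in $c$ for each fixed $\lambda$, optimize in $c$ in closed form, and then recognize the resulting infimum over $\lambda$ as $\|u\|_\gamma^2 - S_\gamma\mathfrak{m}(u)$. The backbone is that each $U_\gamma[\lambda]$ is a Hardy-Sobolev extremizer of unit $L^{2^{\star}}$-norm, and hence satisfies the Euler-Lagrange equation
\[
-\Delta U_\gamma[\lambda] - \frac{\gamma}{|x|^2} U_\gamma[\lambda] = S_\gamma\, U_\gamma[\lambda]^{2^{\star}-1}.
\]
Pairing this identity with $u$ yields $\langle u, U_\gamma[\lambda]\rangle_\gamma = S_\gamma \int_{\R^N} u\, U_\gamma[\lambda]^{2^{\star}-1}\,{\rm d}x$, while pairing it with $U_\gamma[\lambda]$ itself (together with $\|U_\gamma[\lambda]\|_{L^{2^{\star}}} = 1$) gives $\|U_\gamma[\lambda]\|_\gamma^2 = S_\gamma$.

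With these two identities in hand, the quadratic $c \mapsto \|u - cU_\gamma[\lambda]\|_\gamma^2 = \|u\|_\gamma^2 - 2cS_\gamma F(\lambda) + c^2 S_\gamma$, where $F(\lambda) := \int_{\R^N} u\, U_\gamma[\lambda]^{2^{\star}-1}\,{\rm d}x$, is minimized at $c = F(\lambda)$ with minimum value $\|u\|_\gamma^2 - S_\gamma F(\lambda)^2$. Taking the infimum over $\lambda > 0$ and recalling $\mathfrak{m}(u) = \sup_\lambda F(\lambda)^2$ gives \eqref{ditm}. The second assertion then drops out of the same calculation: a pair $(c,\lambda)$ optimizes $\di(u,\mhs)$ precisely when $c = F(\lambda)$ \emph{and} $F(\lambda)^2 = \mathfrak{m}(u)$, i.e., when $U_\gamma[\lambda]$ optimizes $\mathfrak{m}(u)$; and when $\mathfrak{m}(u) > 0$ the corresponding coefficient $c = F(\lambda)$ is nonzero.

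The main obstacle is the attainment of $\mathfrak{m}(u)$, since $\lambda$ runs over the non-compact set $(0,\infty)$. This is where I would pass to the cylindrical coordinates of Section~\ref{isometric lifting}: under $\lambda = e^{-s}$ the family $\hat U_\gamma[s](\cdot) = \hat U_\gamma(\cdot + s)$ is a pure translation of the fixed profile $\hat U_\gamma(t) = 2^{-\frac{N-2}{2}}\cgn \sech^{\frac{N-2}{2}}(\theta t)$, and the isometry furnishes
\[
S_\gamma F(\lambda) \,=\, \langle u, U_\gamma[\lambda]\rangle_\gamma \,=\, \langle \hat u, \hat U_\gamma[s]\rangle_{\var}.
\]
Since $\|\hat U_\gamma[s]\|_\var^2 = S_\gamma$ is independent of $s$, the family $\{\hat U_\gamma[s]\}$ is bounded in $H^1(\R\times\sn)$; testing against $\varphi \in C_c^{\infty}(\R\times\sn)$ and using the exponential decay of $\hat U_\gamma$ shows that $\langle \varphi, \hat U_\gamma[s]\rangle_\var \to 0$ as $|s|\to \infty$, hence $\hat U_\gamma[s] \rightharpoonup 0$ weakly by density. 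Therefore $s \mapsto \langle \hat u, \hat U_\gamma[s]\rangle_\var$ is continuous and vanishes at $\pm\infty$, so the supremum defining $\mathfrak{m}(u)$ is attained at some $s^* \in \R$, producing the optimizing scale $\lambda^* = e^{-s^*} > 0$.
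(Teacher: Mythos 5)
Your proof is correct and follows essentially the same route as the paper's: complete the square using the Euler--Lagrange equation satisfied by each $L^{2^{\star}}$-normalized extremizer, reduce $\di(u,\mathcal{M}_{\mbox{\tiny HS}})^2$ to $\|u\|_\gamma^2 - S_\gamma \sup_\lambda F(\lambda)^2$, and obtain attainment of the supremum because $U_\gamma[\lambda]\rightharpoonup 0$ as the scale degenerates. The only difference is cosmetic: the paper runs the compactness argument directly in $\R^N$ on a maximizing sequence $\lambda_n$, whereas you pass to the cylinder and make explicit the continuity and decay at infinity of $s\mapsto\langle\hat u,\hat U_\gamma[s]\rangle_\var$; both hinge on the same weak-vanishing at extreme scales.
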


\begin{proof}
	
Using the equation satisfied by $U_{\ga}[\la],$ we complete the square
	\begin{eqnarray*}
&& \ \mbox{dist} (u, \mathcal{M}_{\mbox{\tiny{HS}}})^2 = \inf_{(c,\la)\in \R\times\R^{+}} \left\|u-cU_{\ga}[\la]\right\|_{\gamma}^2 \\
	&=&\inf_{\lambda>0}\inf_{c\in\R}\left[ \|u\|_{\gamma}^2 -S_{\gamma}\left(\int_{\R^{N}}uU_{\gamma}[\lambda]^{2^{\star}-1}\,{\rm d}x\right)^2+S_{\gamma}\left(c-\int_{\R^{N}}uU_{\gamma}[\lambda]^{2^{\star}-1}\,{\rm d}x\right)^2\right]\\
	&=&\|u\|_{\gamma}^2-S_{\gamma}\sup_{\lambda>0}\left(\int_{\R^{N}}uU_{\gamma}[\lambda]^{2^{\star}-1}\,{\rm d}x\right)^2,
	\end{eqnarray*}
	as desired. From the above identity it follows that if $\m (u)$ is achieved at $U_{\ga}[\la_0],$ then $\mbox{dist}(u, \mathcal{M}_{\mbox{\tiny HS}})$ is achieved at $c_0U_{\ga}[\la_0],$ where 
 $c_0=\int_{\R^{N}}uU_{\gamma}[\lambda_0]^{2^{\star}-1}\,{\rm d}x$, and vice versa.
	
	\medskip
	
	\noindent In order to complete the proof it remains to show that $\mathfrak{m}(u)$ is always achieved. Notice that, there are two cases:

 \medskip
 
\noindent {\bf{Case 1:}}	 
 $\mathfrak{m}(u)=0$, then 
$\mathfrak{m}(u)=\left(\int_{\R^{N}}uU_{\gamma}[\lambda]^{2^{\star}-1}\,{\rm d}x\right)^2 = 0$ for all $\lambda>0$	and $\di(u,\mhs)$ is achieved at $0\in\mhs$.

 \medskip
 
\noindent{\bf{Case 2:}} If $\mathfrak{m}(u)>0$, then we can find a sequence $\{\la_n\}\subset \R^{+}$ such that 
	\be\nonumber
	\left(\intr uU_{\gamma}[\lambda_n]^{2^*-1}\,{\rm d}x\right)^2\longrightarrow \mathfrak{m}(u)>0, \text{ as }n\to \infty.
	\ee
	Now if $\left|\ln \lambda_n\right|\to\infty$ as $n\to\infty$, then $U_{\ga}[\la_n]\rightharpoonup 0$ in $H^1(\rn)$ and hence $\mathfrak{m}(u)=0$. So, $\mathfrak{m}(u)>0$ implies $\lambda_n\to\lambda_0>0$ and $\mathfrak{m}(u)$ is achieved at $U_{\ga}[\la_0].$
\end{proof}

\section{The Spectral gap and hidden critical level}\label{hiddenlevel}

We recall the radial Bianchi-Egnell best constant is given by
\begin{align}\label{cberadial}
\cber(\gamma) \coloneqq \inf _{\substack{u \not\in \mhs,\\  u \in H^1_{\tiny\mbox{rad}}(\rn)}}\frac{ \|u\|_{\gamma}^2- \sg\|u\|_{\lts(\R^N)}^2}{\di(u,\mhs)^2}.
\end{align}
Clearly  $\cbe(\gamma)\leq \cber(\gamma) $. The next proposition indicates its purpose.
\begin{Prop}\label{constantcber}
 $\cber(\gamma)$ is constant for all $\gamma\in(0, \frac{(N-2)^2}{4})$ and is attained.
\end{Prop}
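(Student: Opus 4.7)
The plan is to pass to cylindrical coordinates and exploit a one-parameter rescaling in the $t$-variable that converts the radial Bianchi--Egnell problem at parameter $\gamma$ into an equivalent problem at $\gamma=0$ (i.e.\ the Sobolev inequality restricted to radial functions), and crucially preserves the Bianchi--Egnell quotient.

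More precisely, for radial $u\in H^1_{\mathrm{rad}}(\R^N)$ the lift $\hat u$ depends only on $t\in\R$. I introduce $w(s):=\hat u(s/\theta)$ and check by a direct change of variables, together with the identity $\var^2=\theta^2(N-2)^2/4$, that
\begin{align*}
\|\hat u\|_{\var}^2 = \theta \|w\|_0^2, \qquad \|\hat u\|_{L^{2^{\star}}(\R\times\sn)}^{2^{\star}} = \theta^{-1}\|w\|_{L^{2^{\star}}(\R\times\sn)}^{2^{\star}},
\end{align*}
where $\|\cdot\|_0^2$ denotes the cylindrical norm corresponding to $\var=(N-2)/2$ (i.e.\ $\gamma=0$). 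Combining with Lemma~\ref{CompareSg/S}, which gives $\sg = \theta^{1+2/2^{\star}}S$, yields $\|\hat u\|_{\var}^2 - \sg\|\hat u\|_{L^{2^{\star}}}^2 = \theta\bigl(\|w\|_0^2 - S\|w\|_{L^{2^{\star}}}^2\bigr)$, so the deficit scales cleanly by $\theta$. For the distance, since $\hug(t)$ is a constant multiple of $\sech^{(N-2)/2}(\theta t)$, the same rescaling sends $\hug[s_0]$ to a constant multiple of $\hat U[\theta s_0]$ (with the constant $C_{\gamma,N}/C_N$ absorbed into $c$), and therefore bijects $\hat{\mathcal{M}}_{\mathrm{HS}}$ onto the $\sigma$-independent part of the Sobolev extremizer set $\{c\hat U[\tau] : c\in\R,\,\tau\in\R\}$. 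Hence $\di(u,\mhs)^2 = \theta\cdot\di(w,\{c\hat U[\tau]\})^2$, and the two factors of $\theta$ cancel in the BE quotient. Taking infima then gives $\cber(\gamma)=C_\star$, a constant independent of $\gamma$.

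For attainment, I invoke the result of Wei--Wu \cite{WeiWu-2024} recalled in the discussion preceding \eqref{gammastar}, which establishes attainment of $\cber(\gamma)$ for $\gamma\in(\gamma_c^{\star},(N-2)^2/4)$. Composing the rescalings above for two admissible parameters $\gamma_1$ and $\gamma_2$ produces an explicit bijection of $H^1_{\mathrm{rad}}(\R^N)$ onto itself preserving the BE quotient and carrying $\hat{\mathcal M}_{\mathrm{HS}}$ for $\gamma_1$ onto $\hat{\mathcal M}_{\mathrm{HS}}$ for $\gamma_2$; consequently any extremizer for $\cber(\gamma_1)$ transfers to an extremizer for $\cber(\gamma_2)$. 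Therefore $\cber(\gamma)$ is attained for every $\gamma\in(0,(N-2)^2/4)$.

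No serious obstacle is anticipated. The only points needing care are the scaling identities above, which reduce to routine calculations using the explicit forms of $\hug$ and $\hat U$. A mild subtlety is that the rescaling sends $\hat{\mathcal{M}}_{\mathrm{HS}}$ onto only the $\sigma$-independent part of the Sobolev extremizer set, not onto the full set $\hat{\mathcal M}_{\mathrm{S}}$; however, this exactly matches the definition of $\cber$ as an infimum over radial functions, so no comparison between the full and radial Sobolev distances is required.
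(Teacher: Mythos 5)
Your proof is correct and takes essentially the same approach as the paper: lift to the cylinder, use the $t$-rescaling together with the relation $\sg=\theta^{1+\frac{2}{2^{\star}}}S$ from Lemma~\ref{CompareSg/S} to show the radial Bianchi--Egnell quotient is $\gamma$-independent, then transfer the known extremizer from the range $\gamma\geq\gamma_c^{\star}$ by the same rescaling. The only cosmetic difference is that you pivot through $\gamma=0$ (correctly noting that the distance there is to the $\sigma$-independent slice of the Sobolev extremizer set rather than to all of $\hat{\mathcal{M}}_{\mathrm{S}}$), whereas the paper rescales directly between two admissible $\gamma_1,\gamma_2>0$ and bookkeeps the distance term through the scaling of $\hat{\mathfrak m}_{\theta}$.
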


 It is convenient to work in the cylindrical co-ordinates. We denote by $\hat{\m}(\hat u)$ the function introduced above, and to keep track of $\bf\theta$ we emphasize the notation $\hat{\m}_{\bf\theta}(\hat u)$ and $\hat{\mathcal{M}}_{\gamma}$ the set of extremizers defined in \eqref{HS-Ext-Cyl}.
Therefore,
\begin{align*}
\hat{\m}_{\bf\theta}(\hat{u}) \coloneqq \sup_{s\in\R}\left(\int_{\R\times \sn} \hat{U}_{\gamma}[s]^{2^\star-1}\hat{u}(t,\sigma)\,{\rm d}t \,\ds\right)^2.
\end{align*}

The proof of Proposition \ref{constantcber} follows from scaling properties of $\hat{m}_{\theta},\|\cdot\|_{\varepsilon}$ and $\|\cdot\|_{\lts}$ mentioned in the following lemma.

\begin{Lem}
Let $\theta_1, \theta_2\in(0,1), \varepsilon_i=\frac{N-2}{2}\theta_i,$ and  $\hat{u}\in H^1(\R\times \sn)$ then,
\begin{itemize}
\item[(a)] $\|\hat u\|_{2^\star}^2=\left(\frac{\theta_2}{\theta_1}\right)^{\frac{2}{2^{\star}}}\left(\int_{\R\times\sn} \hat{u}^{2^{\star}}(\frac{\theta_2}{\theta_1}t,\sigma){\rm d}t\,\ds\right)^{\frac{2}{2^{\star}}}$.

\item[(b)] $\hat{\m}_{\theta_1}(\hat{u})=\left(\frac{\theta_2}{\theta_1}\right)^{\frac{2}{2^{\star}}}\hat{m}_{\theta_2}(\hat{u}(\frac{\theta_2}{\theta_1}\cdot, \cdot)).$
\item[(c)] If $\hat{u}$ is independent of $\sigma$-variable, then
$\|\hat u\|_{\varepsilon_1}^2=\frac{\theta_1}{\theta_2}\left\|\hat u\left(\frac{\theta_2}{\theta_1}\cdot\right)\right\|_{\varepsilon_2}^2.$
\end{itemize}
\end{Lem}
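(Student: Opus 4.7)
\smallskip

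\noindent\textbf{Proof plan.} Each of the three identities is at heart a one-line change of variables in the $t$-variable, namely $t\mapsto (\theta_2/\theta_1)\,t$ (or its inverse); the only subtlety is bookkeeping the various powers of $\theta_2/\theta_1$. Throughout I will abbreviate $\kappa=\theta_2/\theta_1$.

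For part (a), I substitute $r=\kappa t$ inside $\int_{\R\times\sn}\hat u^{2^{\star}}(\kappa t,\sigma)\,{\rm d}t\,{\rm d}\sigma$, which produces a Jacobian factor $\kappa^{-1}$ and yields
$\int_{\R\times\sn}\hat u^{2^{\star}}(\kappa t,\sigma)\,{\rm d}t\,{\rm d}\sigma=\kappa^{-1}\|\hat u\|_{2^\star}^{2^\star}$. Taking the $2/2^{\star}$ power and multiplying by $\kappa^{2/2^{\star}}$ gives the identity.

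For part (b), the key input is the explicit formula $\hat U_{\gamma_i}(t)=2^{-(N-2)/2}C_{\gamma_i,N}\,\sech^{(N-2)/2}(\theta_i t)$, together with the normalization $\|\hat U_{\gamma_i}\|_{L^{2^{\star}}(\R\times\sn)}=1$. A direct computation using $\int_{\R}\sech^{N}(\theta_i t)\,{\rm d}t=\theta_i^{-1}\int_{\R}\sech^N(s)\,{\rm d}s$ shows that the normalizing constant satisfies $C_{\gamma_i,N}\propto \theta_i^{1/2^{\star}}$; in particular
\[
\frac{C_{\gamma_1,N}}{C_{\gamma_2,N}}=\left(\frac{\theta_1}{\theta_2}\right)^{1/2^{\star}}.
\]
Now, given the translation parameter $s_1$ used in $\hat m_{\theta_1}(\hat u)$, I set $s_2=(\theta_1/\theta_2)s_1=\kappa^{-1}s_1$ and substitute $t=\kappa r$ inside the defining integral. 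The $\sech$-profile rescales cleanly to produce $\hat U_{\gamma_2}(r+s_2)$ up to the factor $C_{\gamma_1,N}/C_{\gamma_2,N}$, while the Jacobian is $\kappa$. Collecting the contributions yields
\[
\int_{\R\times\sn}\hat U_{\gamma_1}[s_1]^{2^{\star}-1}\hat u(t,\sigma)\,{\rm d}t\,{\rm d}\sigma=\kappa^{1/2^{\star}}\int_{\R\times\sn}\hat U_{\gamma_2}[s_2]^{2^{\star}-1}\hat u(\kappa r,\sigma)\,{\rm d}r\,{\rm d}\sigma,
\]
after combining the Jacobian $\kappa$ with the factor $(C_{\gamma_1,N}/C_{\gamma_2,N})^{2^{\star}-1}=\kappa^{-(2^{\star}-1)/2^{\star}}$. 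Squaring and taking $\sup_{s_1}\leftrightarrow \sup_{s_2}$ produces the stated identity.

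For part (c), since $\hat u$ is $\sigma$-independent, $\|\hat u\|_{\varepsilon_1}^2=|\sn|\int_{\R}(|\hat u'|^2+\varepsilon_1^2 \hat u^2)\,{\rm d}t$. Setting $v(r)=\hat u(\kappa r)$, a change of variables gives
\[
\|v\|_{\varepsilon_2}^2=|\sn|\int_{\R}\Bigl(\kappa\,|\hat u'(t)|^2+\kappa^{-1}\varepsilon_2^2\,\hat u(t)^2\Bigr)\,{\rm d}t.
\]
The crucial balancing is $\kappa^{-1}\varepsilon_2^2=\kappa\,\varepsilon_1^2$, which follows immediately from $\varepsilon_i=\frac{N-2}{2}\theta_i$ (both sides equal $\left(\frac{N-2}{2}\right)^2\theta_1\theta_2$). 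Factoring out $\kappa$ then yields $\|v\|_{\varepsilon_2}^2=\kappa\|\hat u\|_{\varepsilon_1}^2$, i.e.\ part (c).

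I do not foresee any serious obstacle; the only care needed is the constant-tracking in (b), which is why I isolate the scaling $C_{\gamma_i,N}\propto \theta_i^{1/2^{\star}}$ as a preliminary computation before performing the substitution.
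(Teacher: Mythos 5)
Your proof is correct and follows essentially the same change-of-variables strategy as the paper in all three parts. The only genuine (if minor) departure is in part (b): the paper invokes Lemma \ref{dofm} to choose an attained maximizer $s$ for $\hat{\m}_{\theta_1}(\hat u)$, obtains a one-sided inequality $\hat{\m}_{\theta_1}(\hat u)\le\left(\theta_2/\theta_1\right)^{2/2^{\star}}\hat{\m}_{\theta_2}\left(\hat u\left(\tfrac{\theta_2}{\theta_1}\cdot,\cdot\right)\right)$ by substituting $t\mapsto(\theta_2/\theta_1)t$ and rescaling the $\lts$-normalization of $\sech(\theta_1\cdot)$, then closes by interchanging $\theta_1\leftrightarrow\theta_2$; you instead track the constant $C_{\gamma,N}\propto\theta^{1/2^{\star}}$ explicitly and observe that $s_1\mapsto\kappa^{-1}s_1$ ($\kappa=\theta_2/\theta_1$) is a bijection of $\R$, so the two suprema are equal up to the factor $\kappa^{2/2^{\star}}$ without ever needing attainment. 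The two routes are computationally equivalent (your $\left(C_{\gamma_1,N}/C_{\gamma_2,N}\right)^{2^{\star}-1}=\kappa^{-(2^{\star}-1)/2^{\star}}$ is precisely the paper's $\|\sech(\theta_1\cdot)\|_{\lts}^{-(2^\star-1)}=(\theta_1/\theta_2)^{(2^\star-1)/2^\star}\|\sech(\theta_2\cdot)\|_{\lts}^{-(2^\star-1)}$), but the bijection argument is marginally cleaner since it bypasses the appeal to an existence result for the maximizer. Parts (a) and (c) are the same as in the paper; in (c) the key identity $\kappa^{-1}\varepsilon_2^2=\kappa\,\varepsilon_1^2$ you isolate is exactly the paper's use of $\varepsilon_1=\tfrac{\theta_1}{\theta_2}\varepsilon_2$.
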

\begin{proof}
(a) follows form the change of variable $t\leftrightarrow\frac{\theta_2}{\theta_1}t.$ 

\noindent
(b) By Lemma \ref{dofm} there exist $s\in \R$ such that $\hat{\m}_{\theta_1}(\hat{u})$ is attained. Changing the variable $t\leftrightarrow\frac{\theta_2}{\theta_1}t$ we get
\begin{align*}
\hat{\m}_{\theta_1}(\hat{u})&=\left(\frac{\theta_2}{\theta_1}\right)^2\left(\int_{\R \times \sn}\hat{u} \left(\frac{\theta_2}{\theta_1}t, \sigma \right)\frac{\sech^{N+2}(\theta_2t +\theta_1 s)}{\|\sech(\theta_1\cdot)\|_{\lts}^{2^{\star}-1}}{\rm d}t\,\ds\right)^2\\
&=\left(\frac{\theta_2}{\theta_1}\right)^2\left(\frac{\theta_1}{\theta_2}\right)^{\frac{2(2^{\star}-1)}{2^{\star}}}\left(\int_{\R \times \sn} \hat{u} \left(\frac{\theta_2}{\theta_1}t, \sigma \right)\frac{\sech^{N+2}(\theta_2t +\theta_1 s)}{\|\sech(\theta_2\cdot)\|_{\lts}^{2^{\star}-1}}{\rm d}t\,\ds\right)^2\\
&\leq \left(\frac{\theta_2}{\theta_1}\right)^{\frac{2}{2^{\star}}}\hat{\m}_{\theta_2}\left(\hat{u}\left(\frac{\theta_2}{\theta_1}\cdot, \cdot\right)\right).
\end{align*}
Interchanging the role of $\theta_1,\theta_2$, we get
\begin{align*}
\hat{\m}_{\theta_2}(\hat{u})\leq \left(\frac{\theta_1}{\theta_2}\right)^{\frac{2}{2^{\star}}}\hat{\m}_{\theta_1}\left(\hat{u}\left(\frac{\theta_1}{\theta_2}\cdot, \cdot\right)\right)
\end{align*}
and hence equality holds.

\noindent
(c)  We do the similar change of variable as in part(a) with $t=\frac{\theta_2}{\theta_1}s$
\begin{align*}
\|\hat{u}\|_{\varepsilon_1}^2&=\int_{\R\times\sn}\left(\left|\partial_t \hat{u}\right|^2+\varepsilon_1^2 \hat{u}^2\right)\,{\rm d}t\,\ds\\
&=\left(\frac{\theta_1}{\theta_2}\right)\int_{\R\times\sn}\left(\left|\partial_s\left(\hat{u}\left(\frac{\theta_2}{\theta_1}s\right)\right)\right|^2 +\varepsilon_2^2 \hat{u}^2\left(\frac{\theta_2}{\theta_1}s\right)\right)\,{\rm d}s\,\ds\\
&=\left(\frac{\theta_1}{\theta_2}\right)\left\|\hat{u}\left(\frac{\theta_2}{\theta_1}\cdot\right)\right\|_{\varepsilon_2}^2,
\end{align*}
where in the second step we used $\varepsilon_1=\frac{\theta_1}{\theta_2}\varepsilon_2.$
This completes the proof of the Lemma.
\end{proof}

\medskip

\noindent 
 {\bf{Proof of Proposition \ref{constantcber} }}
 \begin{proof}
Given admissible $\gamma_1, \gamma_2,$ we associate $\theta_1, \theta_2$ by the relation \eqref{constant1}. By Lemma \ref{CompareSg/S}, $S_{\gamma}=\theta^{(1+\frac{2}{2^{\star}})}S$ and hence \begin{align*}
 \left(\frac{\theta_2}{\theta_1}\right)^{1+\frac{2}{2^{\star}}}S_{\gamma_1}=S_{\gamma_2}.
\end{align*} 
Moreover, $\hat\phi \in \hat{\mathcal{M}}_{\gamma_1}$ if and only if $\hat\phi \left(\frac{\theta_2}{\theta_1} \cdot\right) \in \hat{\mathcal{M}}_{\gamma_2}.$ Also, recall our convention $\widehat{H^1_{\mbox{\tiny{rad}}}}(\R^N)= H^1(\R).$ 
For $\hat u \in H^1(\R),$ we denote $\hat v(t) = u\left(\frac{\theta_2}{\theta_1}t\right)$. Then,
\begin{align*}
\cber(\gamma_1)&=\inf_{\substack{\hat{u}\in H^1(\R)\setminus\{0\} \\  \hat{u}\not\in\hat{\mathcal{M}}_{\gamma_1}}}\frac{\|\hat u\|_{\varepsilon_1}^2-S_{\gamma_1}\|\hat{u}\|_{\lts}^2}{\|\hat{u}\|_{\varepsilon_1}^2-S_{\gamma_1}\hat{\m}_{\theta_1}(\hat{u})}\\
&=\inf_{\substack{\hat v\in H^1(\R)\setminus\{0\} \\  \hat v\not\in\hat{\mathcal{M}}_{\gamma_2}}}\frac{\frac{\theta_1}{\theta_2}\|\hat v\|_{\varepsilon_2}^2-S_{\gamma_1}\left(\frac{\theta_2}{\theta_1}\right)^{\frac{2}{2^{\star}}}\|\hat v\|_{\lts}^2}{\frac{\theta_1}{\theta_2}\|\hat v\|_{\varepsilon_2}^2-S_{\gamma_1}\left(\frac{\theta_2}{\theta_1}\right)^{\frac{2}{2^{\star}}}\hat{\m}_{\theta_2}(\hat v)}\\
&=\cber(\gamma_2).
\end{align*} 

If $\gamma_1 \geq \gamma_c^{\star},$ then we know there exists a minimizer, denote it by $\hat\phi_{\gamma_1}.$ Given any $\gamma_2,$ we scale it as above to obtain the desired minimizer.
 \end{proof}

\subsection{$\cbe(\gamma)$ less than the hidden critical level}
We prove that $\cbe(\gamma)$ is below the hidden critical level $1 - \frac{S_{\gamma}}{S}$ for every $\gamma\in \left(0,\frac{(N-2)^2}{4}\right)$ and $N \geq 4,$ an essential ingredient to obtain a non-zero weak limit.

\medskip

\subsubsection{Spectral Gap Inequality}
Wei-Wu in \cite[Proposition~3.1]{WeiWu-2024} obtained the precise spectral gap in context of CKN-inequality and hence the local Bianchi-Egnell critical level $\cbel(\gamma) \coloneqq 1-\frac{\mg_2}{\mg_3}$ in the full range $0 < \gamma <  \frac{(N-2)^2}{4}$ (in their article for $0<a< (N-2)/2$) and it is given by
\begin{align} \label{lambda_gamma}
\Lambda(\gamma)=
\begin{cases}
\frac{2+2q(a)+(2^{\star}-2)(1+q(a))^{\frac{1}{2}}-2^{\star}(2^{\star}-1)}{2+2q(a)+(2^{\star}-2)(1+q(a))^{\frac{1}{2}}},\ \ \ & \ \mbox{if} \ 0< \gamma\leq \gamma_c^{\star},  \\
\frac{4}{N+4},\ \ \ & \mbox{if} \ \gamma_c^{\star}< \gamma< \frac{(N-2)^2}{4},
\end{cases}
\end{align}
where $q(a)=\frac{N-1}{\var^2}$. Recall that $\gamma$ and $a$ are related by $\gamma = a (N-2-a)$ and $\gamma_c^{\star}$ is defined in \eqref{gammastar}. Also see Oliver Rey \cite[Appendix~D]{RO-1990} for the spectral gap in the case of Sobolev Inequality.  The next lemma shows that in this region, the spectral gap is indeed strictly below the hidden critical threshold. We  remark that by testing $\delta(u)/\mbox{dist} (u, \mathcal{M}_{\mbox{\tiny{HS}}})^2$ against an Aubin-Talenti bubble $U[z,1]$ and letting $|z| \rightarrow \infty$ provides $\cbe(\gamma) \leq 1 - \frac{S_{\gamma}}{S}.$ However, we also need $\cbe(\gamma) < \cbel(\gamma).$ Therefore, it is convenient to compare $\Lambda(\gamma)$ and $1 - \frac{S_{\gamma}}{S}.$
\begin{Lem}\label{cbel less hidden level}
We have $\Lambda(\gamma)< 1-\frac{\sg}{S},$ for 
\begin{itemize}
\item[(a)] either $N \geq 4$ and $0 < \gamma < \frac{(N-2)^2}{4},$ 

\item[(b)] or,  $N=3$ and $ \left(1-\left(\frac{N}{N+4}\right)^{\frac{N}{N-1}}\right)\frac{(N-2)^2}{4}<\gamma < \frac{(N-2)^2}{4}$.
\end{itemize}
\end{Lem}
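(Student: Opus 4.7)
I set $\theta := 2\varepsilon/(N-2) \in (0,1)$ and use Lemma~\ref{CompareSg/S} to rewrite $1 - S_\gamma/S = 1 - \theta^{2(N-1)/N}$. Introducing $v = v(\theta) := \sqrt{(N-2)^2\theta^2 + 4(N-1)}$ (so that $\sqrt{1+q} = v/((N-2)\theta)$, via \eqref{lambda_gamma} and \eqref{constant1}), together with $2^{\star} - 2 = 4/(N-2)$ and $2^{\star}(2^{\star}-1) = 2N(N+2)/(N-2)^2$, the lower-regime expression in \eqref{lambda_gamma} simplifies to
\begin{align*}
\Lambda(\gamma) = 1 - \frac{N(N+2)\,\theta^2}{v(\theta)\bigl(v(\theta) + 2\theta\bigr)} \qquad \text{for } 0 < \gamma \leq \gamma_c^{\star},
\end{align*}
and the desired inequality $\Lambda(\gamma) < 1 - \theta^{2(N-1)/N}$ is equivalent to
\begin{align*}
R(\theta) := N(N+2)\,\theta^{2/N} - v(\theta)\bigl(v(\theta) + 2\theta\bigr) > 0.
\end{align*}
Let $\theta_c^{\star} := \sqrt{(N-1)/(2N)}$, the value of $\theta$ at $\gamma = \gamma_c^{\star}$.

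\textbf{Upper regime $\gamma \geq \gamma_c^{\star}$ ($\theta \leq \theta_c^{\star}$).} Here $\Lambda(\gamma) = 4/(N+4)$ and the inequality reduces to $\theta^{2(N-1)/N} < N/(N+4)$, equivalent to
\begin{align*}
\gamma > \left(1 - \left(\tfrac{N}{N+4}\right)^{N/(N-1)}\right)\tfrac{(N-2)^2}{4}.
\end{align*}
For $N = 3$ this is exactly the threshold in part (b), and it lies above $\gamma_c^{\star}$; for $N \geq 4$ a direct verification of $(N-1)/(2N) < (N/(N+4))^{N/(N-1)}$ (easy at $N=4$ and straightforward by comparing asymptotics) shows the threshold lies below $\gamma_c^{\star}$, so the inequality holds on all of $[\gamma_c^{\star}, (N-2)^2/4)$.

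\textbf{Lower regime $0 < \gamma \leq \gamma_c^{\star}$ for $N \geq 4$ ($\theta \in [\theta_c^{\star}, 1)$).} My strategy is a concavity plus boundary-value argument for $R$. The map $\theta \mapsto N(N+2)\theta^{2/N}$ is strictly concave since $2/N < 1$, while $v^2 = (N-2)^2\theta^2 + 4(N-1)$ is quadratic (convex), and a short calculation yields
\begin{align*}
(\theta v)'' = \frac{(N-2)^2\,\theta\,\bigl(2(N-2)^2\theta^2 + 12(N-1)\bigr)}{v^3} > 0,
\end{align*}
so $v^2 + 2\theta v$ is strictly convex; hence $R$ is strictly concave on $(0,\infty)$. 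Evaluating at the endpoints, $v(1) = N$ gives $R(1) = N(N+2) - N(N+2) = 0$, while $v(\theta_c^{\star}) = (N+2)\theta_c^{\star}$ (from $v^2(\theta_c^{\star}) = (N-1)(N+2)^2/(2N)$) gives
\begin{align*}
R(\theta_c^{\star}) = (N+2)\,\theta_c^{\star\,2/N}\,\Bigl(N - (N+4)\,\theta_c^{\star\,2(N-1)/N}\Bigr) > 0,
\end{align*}
the positivity being precisely the upper-regime inequality at the endpoint. By the chord inequality for strictly concave functions,
\begin{align*}
R(\theta) \geq \frac{1-\theta}{1-\theta_c^{\star}}\,R(\theta_c^{\star}) > 0 \qquad \text{for every } \theta \in [\theta_c^{\star}, 1),
\end{align*}
which finishes the proof.

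\textbf{Main obstacle.} The delicate step is the strict convexity of $\theta \mapsto \theta v(\theta)$, a product in which $v$ has mixed concavity behavior; it is precisely this convexity (driven by the lower bound $v > (N-2)\theta$) that makes $R$ strictly concave and lets the chord argument close. Once that convexity is settled, the two regimes meet consistently at $\theta_c^{\star}$, and the argument links upper and lower cases through the same boundary inequality.
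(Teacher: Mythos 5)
Your proof is correct and follows the paper's own route: your $R(\theta)$ is exactly $(N-2)^2 f_2(\theta)$, so your convexity computation for $\theta v(\theta)$, the boundary values $R(1)=0$ and $R(\theta_c^{\star})>0$ (equivalent to the same endpoint inequality $\bigl(\tfrac{N-1}{2N}\bigr)^{(N-1)/N} < \tfrac{N}{N+4}$ that the paper establishes for $N\geq 7$ and checks numerically for $N=4,5,6$), and the chord argument are the content of Lemma~\ref{positive} in slightly cleaner notation. The one place worth tightening before this could stand alone is the claim that $(N-1)/(2N) < (N/(N+4))^{N/(N-1)}$ for all $N\geq 4$: the asymptotic comparison does not by itself cover $N=4,5,6$, so those cases need the explicit check the paper performs.
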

\begin{proof}
Recall that $\gamma$ and $\theta$ are related by $\theta^2 = 1 - \frac{4\gamma}{(N-2)^2}.$
By Lemma \ref{CompareSg/S}, $\frac{S_{\gamma}}{S} = \theta^{1 + \frac{2}{2^{\star}}}.$ A simplification of $\Lambda(\gamma)$ in the region $0<\gamma\leq \gamma_c^{\star}$ gives
\begin{align*}
1 - \Lambda(\gamma) &= \frac{ 2^{\star} (2^{\star} - 1)}{2+2q(a)+(2^{\star}-2)(1+q(a))^{\frac{1}{2}}} \\
&=  \frac{\frac{2N(N+2)}{(N-2)^2}}{2+2q(a)+(2^{\star}-2)(1+q(a))^{\frac{1}{2}}}
\end{align*}
On the other hand, using the relation $\theta^2=\frac{4\var^2}{(N-2)^2}$ the denominator can be simplified to 
\begin{align*}
2+2q(a)+(2^{\star}-2)(1+q(a))^{\frac{1}{2}} &= 2 \left(1+\frac{N-1}{\var^2}+\frac{2}{N-2}\sqrt{1+\frac{(N-1)}{\var^2}}\right)\\
& = 2\left( 1 + \frac{4(N-1)}{(N-2)^2 \theta^2}+\frac{2}{N-2}\sqrt{1+\frac{4(N-1)}{(N-2)^2 \theta^2}}  \ \right).
\end{align*}
Therefore, the desired inequality in the region $0<\gamma\leq \gamma_c^{\star}$ is equivalent to  $\theta^{1 + \frac{2}{2^{\star}}} < 1 - \Lambda(\gamma),$ i.e.,
\begin{align*}
\theta^{1 + \frac{2}{2^{\star}}}\left( 1 + \frac{4(N-1)}{(N-2)^2 \theta^2}+\frac{2}{N-2}\sqrt{1+\frac{4(N-1)}{(N-2)^2 \theta^2}}\right) < \frac{N(N+2)}{(N-2)^2}
\end{align*}
i.e.,
\begin{align} \label{Suffcond1}
\theta^{-\frac{2}{N}}\left( \theta^2 + \frac{4(N-1)}{(N-2)^2}+\frac{2\theta}{N-2}\sqrt{\theta^2+\frac{4(N-1)}{(N-2)^2 }}\right) < \frac{N(N+2)}{(N-2)^2}.
\end{align}
Whereas, in the region $\gamma_c^{\star}<\gamma<\frac{(N-2)^2}{4}$, the inequality $\theta^{1+\frac{2}{2^{\star}}}<1-\Lambda(\gamma)$, transforms to the requirement of 
\begin{align}\label{Suffcond2}
\theta^{\frac{2(N-1)}{N}}< \frac{N}{N+4}.
\end{align}
We define the functions
\begin{align}
f_1(\theta)&\coloneqq\frac{N}{N+4}-\theta^{\frac{2(N-1)}{N}}, \label{f1}\\
f_2(\theta)&\coloneqq\frac{N(N+2)}{(N-2)^2}\theta^{\frac{2}{N}}-\left(\theta^2+\frac{4(N-1)}{(N-2)^2}+\frac{2\theta}{N-2}\sqrt{\theta^2+\frac{4(N-1)}{(N-2)^2}}\right) \label{f2}.
\end{align}
Therefore,
to prove the sufficient conditions \eqref{Suffcond1} and \eqref{Suffcond2} we have to show that
  $f_1(\theta)>0$  for $\theta\in \left(0,\sqrt{\frac{N-1}{2N}}\right)$ and $f_2(\theta)>0$ for $\theta\in \left[\sqrt{\frac{N-1}{2N}},1\right)$ .
By Lemma \ref{positive} below, this is true for $N \geq 3$, except when $N=3$, \eqref{Suffcond2} is true only in the region $\theta\in \left(0,\left(\frac{N}{N+4}\right)^{\frac{N}{2(N-1)}}\right)$. This completes the proof. 

\end{proof}
\begin{Lem} \label{positive}
Let $f_1$ and $f_2$ be as in \eqref{f1} and \eqref{f2} respectively.
 \begin{itemize}
\item[(a)] If $N\geq 4$ then
\begin{align*}
f_1(\theta)>0 \ \mbox{for all} \ \theta\in \left(0,\sqrt{\frac{N-1}{2N}}\right), \  \ \mbox{and} \ \ 
f_2(\theta)>0 \ \mbox{for all} \ \theta\in \left[\sqrt{\frac{N-1}{2N}},1\right).
\end{align*}
\item[(b)] If $N=3,$ then $f_1(\theta)>0$ for $0 < \theta < \left(\frac{N}{N+4}\right)^{\frac{N}{2(N-1)}}$. 
%and $f_2(\theta) >0$ for all $\sqrt{\frac{N-1}{2N}} \leq \theta < 1.$
\end{itemize}
\begin{proof} First we consider $f_1$. Since it is decreasing, enough to show
\begin{align*}
f_1\left(\sqrt{\frac{N-1}{2N}}\right)=\frac{N}{N+4}-\left(\frac{N-1}{2N}\right)^{\frac{N-1}{N}}>0
\end{align*}
Let $x=1-\frac{1}{N} \in (0,1).$ Then the required inequality is $\frac{1}{5-4x}>\left(\frac{x}{2}\right)^x$ or equivalently, 
\begin{align*}
\ln(5-4x)+x\ln x+\ln \left(\frac{1}{2}\right)x<0.
\end{align*}
Using $\ln (5-4x)<4(1-x)$ and $x\ln x<0$, the above inequality is true for $4-(4+\ln 2)x<0$, that is 
\begin{align*}
N>\frac{4+\ln 2}{\ln 2}\approx 6.77.
\end{align*}   
For $N=4,5,6$, we plug  in directly and compute $f_1\left(\sqrt{\frac{N-1}{2N}}\right)$ to show it is positive and for $N= 3$, $f_1(\theta)>0$ for $ \theta<\left(\frac{N}{N+4}\right)^{\frac{N}{2(N-1)}}$.

\medskip

We now consider $f_2.$ Note that  $f$ is strictly concave and hence has at-most two zeros. 
 Moreover $f_2(\theta)\to -\infty$ whenever $\theta\to \pm \infty$.
 Since
\begin{align*}
f_2(1)=\frac{N(N+2)}{(N-2)^2}-\left(1+\frac{4(N-1)}{(N-2)^2}+\frac{2}{N-2}\sqrt{1+\frac{4(N-1)}{(N-2)^2}}\right)=0.
\end{align*}
and,
\begin{align*}
&\quad\quad\quad\ f_2\left(\sqrt{\frac{N-1}{2N}}\right)\\
&=\frac{N(N+2)}{(N-2)^2}\left(\frac{N-1}{2N}\right)^\frac{1}{N}-\left(\frac{N-1}{2N}+\frac{4(N-1)}{(N-2)^2}+\frac{2}{(N-2)}\frac{N-1}{2N}\sqrt{1+\frac{8N(N-1)}{(N-2)^2(N-1)}}\right)\\
&=\frac{(N+2)(N+4)}{(N-2)^2}\left(\frac{N-1}{2N}\right)^{\frac{1}{N}}\left\{\frac{N}{N+4}-\left(\frac{N-1}{2N}\right)^{1-\frac{1}{N}}\right\}>0,
\end{align*}
by part (a), we conclude  $f_2(\theta)>0$ for all $\theta\in\left(\sqrt{\frac{N-1}{2N}},1\right)$ and $N\geq 4$.  
This completes the proof of the lemma.
\end{proof} 
\end{Lem}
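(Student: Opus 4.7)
My plan is to handle $f_1$ and $f_2$ separately, exploiting very different structures: monotonicity for $f_1$ and strict concavity for $f_2$. Since the exponent $\frac{2(N-1)}{N}$ is positive, $f_1$ is strictly decreasing on $(0,\infty)$, so in both parts it suffices to verify positivity at the right endpoint of the claimed interval. Part (b) is then immediate, since $f_1(\theta) > 0$ is literally equivalent to $\theta < (N/(N+4))^{N/(2(N-1))}$. For part (a), I must verify the single inequality
\begin{align*}
f_1\!\left(\sqrt{\frac{N-1}{2N}}\right) = \frac{N}{N+4} - \left(\frac{N-1}{2N}\right)^{\frac{N-1}{N}} > 0, \qquad N \geq 4.
\end{align*}
Taking logarithms, this reduces to $\frac{N-1}{N}\ln\frac{2N}{N-1} > \ln\frac{N+4}{N}$. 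The LHS tends to $\ln 2$ while the RHS tends to $0$, and a crude estimate such as $\ln(1 + 4/N) \leq 4/N$ together with a monotonicity argument settles all sufficiently large $N$; the finitely many small cases $N = 4, 5, 6$ are dispatched by direct numerical evaluation.

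For $f_2$ the strategy is qualitative and rests on strict concavity. I would first observe that $f_2$ is strictly concave on $(0, \infty)$: the term $\theta^{2/N}$ is strictly concave since $2/N \in (0, 1)$, $-\theta^2$ is concave, and for the square-root term a direct computation gives
\begin{align*}
\frac{d^2}{d\theta^2}\left(\theta\sqrt{\theta^2+C}\right) = \frac{\theta(2\theta^2 + 3C)}{(\theta^2 + C)^{3/2}} > 0, \qquad C > 0,\ \theta > 0,
\end{align*}
so its negative is concave. Strict concavity implies $f_2$ has at most two zeros on $(0, \infty)$. A direct algebraic identity — hinging on the pleasant simplification $\sqrt{(N-2)^2 + 4(N-1)} = N$ — gives $f_2(1) = 0$, while $f_2(0^+) = -4(N-1)/(N-2)^2 < 0$.

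To pin down the second zero I would evaluate $f_2$ at $\eta := \sqrt{(N-1)/(2N)}$ in closed form. The radical collapses nicely because $\eta^2 + 4(N-1)/(N-2)^2 = (N-1)(N+2)^2/[2N(N-2)^2]$, and after collecting all the terms the expression factors as
\begin{align*}
f_2(\eta) = \frac{(N+2)(N+4)}{(N-2)^2}\left(\frac{N-1}{2N}\right)^{1/N} f_1(\eta),
\end{align*}
so $f_2(\eta) > 0$ follows immediately from the $f_1$-inequality established above. Combined with $f_2(1) = 0$, strict concavity forces the other zero of $f_2$ to lie in $(0, \eta)$, and $f_2 > 0$ strictly between the two zeros; in particular $f_2 > 0$ on $[\eta, 1)$. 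The main obstacle, foreshadowed by the shrinking of the admissible range in part (b), is that the elementary $f_1$-inequality at $\eta$ is genuinely tight for small $N$ (it actually fails when $N = 3$), so no single clean analytic estimate handles all $N \geq 4$; one must combine an asymptotic bound with a short numerical check. The remaining ingredients — concavity of $f_2$, the identity $f_2(1) = 0$, and the factorization of $f_2(\eta)$ through $f_1(\eta)$ — are routine calculations, though the factorization is the one clever observation to anticipate.
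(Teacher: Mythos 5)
Your proposal is correct and takes essentially the same route as the paper: the $f_1$ estimate is reduced by monotonicity to an endpoint check, split into a logarithmic bound for large $N$ plus a direct check for small $N$, and the $f_2$ estimate rests on strict concavity, the identity $f_2(1)=0$, and the factorization $f_2\bigl(\sqrt{(N-1)/(2N)}\bigr)$ through $f_1$, exactly as the paper does. The extra details you supply (the second-derivative calculation certifying concavity of $-\theta\sqrt{\theta^2+C}$, and the sign of $f_2(0^+)$) are correct and merely flesh out steps the paper asserts without computation.
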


\section{ The Bianchi-Egnell $2$-peak critical levels}\label{twopeak}

 In this section we prove $C_{\mbox{\tiny BE}}^{\mbox{\tiny rad}}(\gamma) < C_{\mbox{\tiny BE}}^{\mbox{\tiny{2-peak}}}(\gamma).$ Due to the relation between the Hardy-Sobolev and the CKN inequality mentioned in the introduction, one can directly use the results of \cite{WeiWu-2024} to conclude the $2$-peak level. However, we thought to include it because of the following reasons.  Since \cite{WeiWu-2024} treated the general CKN inequality singling out the exact result for the Hardy-Sobolev is cumbersome and in their argument they implicitly used Kelvin transform, as in \cite{KT-2025}, to obtain their results. On the other hand, our argument is (primarily) independent of the use of the Kelvin transform (though lifting to the cylindrical co-ordinates involve Kelvin transform) and we do not need the explicit structure of the optimizers. Our arguments are general enough so that it can be applied to other context where only the asymptotic behaviour of the extremizers are known.

\begin{Lem}\label{2-peak-th}
	For $0<\gamma< \frac{(N-2)^2}{4}$, we have $\cber(\gamma) <2-2^{\frac{2}{2^{\star}}}$.
\end{Lem}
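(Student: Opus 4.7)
The plan is to exhibit an explicit radial test function and to analyze the associated Bianchi--Egnell quotient asymptotically. Working in the cylindrical coordinates of Section~\ref{isometric lifting}, I take
$$\hat v_s \coloneqq \hug[s] + \hug[-s], \qquad s \gg 1.$$
Since $\hug$ is $\sigma$-independent and even, so is $\hat v_s$; the corresponding $v_s$ lies in $H^1_{\mbox{\tiny{rad}}}(\rn)$, and its two distinct peaks at $t = \pm s$ prevent $v_s$ from being a single Hardy--Sobolev bubble, so $v_s\notin\mhs$ and it is an admissible competitor for $\cber(\gamma)$.

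Introduce the interaction quantities $I(s) \coloneqq \langle \hug[s], \hug[-s]\rangle_{\var}$ and $J(s) \coloneqq I(s)/\sg$. Integration by parts against the Euler--Lagrange equation $-\partial_{tt}\hug + \var^2 \hug = \sg\, \hug^{2^{\star}-1}$ gives
$$I(s) = \sg \int_{\R\times\sn} \hug[s]^{2^{\star}-1}\hug[-s]\,{\rm d}t\,{\rm d}\sigma > 0,$$
and the asymptotics $\hug(t) \sim Ce^{-\var|t|}$ force $J(s) \to 0$ exponentially. Direct expansion gives
\begin{align*}
\|\hat v_s\|_{\var}^2 &= 2\sg + 2I(s),\\
\|\hat v_s\|_{\lts}^{2^{\star}} &= 2 + 2\cdot 2^{\star} J(s) + O(J(s)^2),
\end{align*}
the cross terms coming from overlap of the two peaks and higher-order products being exponentially smaller. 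Raising to the $2/2^{\star}$-th power and substituting yields
$$\delta(\hat v_s) = (2 - 2^{2/2^{\star}})\sg - 2(2^{2/2^{\star}} - 1)\,I(s) + O(J(s)^2),$$
whose linear correction is \emph{strictly negative} because $1 < 2^{2/2^{\star}} < 2$.

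For the denominator, Lemma~\ref{dofm} reduces matters to two-sided control of
$$\hat{\m}_{\theta}(\hat v_s) = \sup_{s'\in\R}\bigl(a(s'-s) + a(s'+s)\bigr)^2,\qquad a(r) \coloneqq \int_{\R\times \sn}\hug(t-r)\,\hug(t)^{2^{\star}-1}\,{\rm d}t\,{\rm d}\sigma.$$
The lower bound $\hat{\m}_{\theta}(\hat v_s) \geq (1 + J(s))^2$ is immediate by evaluating at $s' = s$, since $a(0) = 1$ and $a(2s) = J(s)$. The matching upper bound $\hat{\m}_{\theta}(\hat v_s) \leq 1 + 2J(s) + O(J(s)^2)$ comes from a case split on $s'$: outside a fixed large neighborhood of $\pm s$ both $a(s'\pm s)$ are uniformly $o(1)$ by exponential decay of $a$; inside such a neighborhood, one factor is bounded by $a(0) = 1$ and the other by a constant multiple of $J(s)$. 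Combined with $\|\hat v_s\|_{\var}^2 = 2\sg(1 + J(s))$, the linear $J(s)$ terms cancel and
$$\di(v_s,\mhs)^2 = \sg + O(J(s)^2).$$
Subtracting the two expansions,
$$\delta(\hat v_s) - (2 - 2^{2/2^{\star}})\di(v_s,\mhs)^2 = -2(2^{2/2^{\star}}-1)\sg\, J(s) + O(J(s)^2) < 0$$
for $s$ sufficiently large, which after dividing by $\di(v_s,\mhs)^2 > 0$ gives $\cber(\gamma) \leq \delta(\hat v_s)/\di(v_s,\mhs)^2 < 2 - 2^{2/2^{\star}}$.

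The main obstacle is the sharp upper bound on $\hat{\m}_{\theta}(\hat v_s)$ to order $J(s)$. Without this sharpness, an uncontrolled $O(J(s))$ error in the denominator could swamp the favorable $-2(2^{2/2^{\star}}-1)\sg J(s)$ term in the numerator, and the argument would degenerate to the trivial estimate $\cber(\gamma) \leq 2 - 2^{2/2^{\star}}$. The sharpness ultimately rests on the unimodality, positivity, and exponential decay of $a$, which together prevent the two translated peaks from producing a supremum that exceeds $a(0)=1$ by more than $O(J(s))$.
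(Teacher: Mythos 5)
Your choice of test function, $\hug[s]+\hug[-s]$, differs from the paper's $\hug+\hug[s]$ only by a translation in $t$ and a relabeling $s \leftrightarrow 2s$, and your expansions of $\|\hat v_s\|_{\var}^2$ and $\|\hat v_s\|_{\lts}^{2^{\star}}$ and the sign analysis of the linear correction are exactly the paper's. So the approach is the same; the issue lies in the denominator.

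You assert the sharp bound $\hat{\mathfrak m}(\hat v_s) \le 1 + 2J(s) + O(J(s)^2)$, but the argument you supply does not establish the coefficient. Your case split says: when $s'$ lies in a fixed neighborhood of one peak, one factor is $\le a(0)=1$ and the other is $\le cJ(s)$ for a constant $c$ depending on the neighborhood size. Squaring this gives $\hat{\mathfrak m}(\hat v_s) \le 1 + 2cJ(s) + O(J(s)^2)$, with $c$ uncontrolled (it grows with the neighborhood radius $R$ roughly like $e^{\var R}$). Plugging back into the quotient, the argument only closes if $c < 2/(2 - 2^{\frac{2}{2^{\star}}})$, and nothing in the case split forces that. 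Your own last paragraph observes that an uncontrolled $O(J(s))$ error in the denominator collapses the conclusion to the trivial inequality, and the phrase ``does not exceed $1$ by more than $O(J(s))$'' is not a statement of the required $1 + J(s) + o(J(s))$ precision. So this is a genuine gap, not a glossed-over routine step.

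The paper fills exactly this hole in Steps~3--5 of Proposition~\ref{2-peak-key-est}. Step~3 shows by a compactness argument that any maximizing sequence $\tau_n$ for $\hat{\mathfrak m}(\hat v_{s_n})$ must (after centering) converge to $0$ or $s_n$. Step~4 then establishes the uniform estimate $\|\hug[\tau_n]/\hug - 1\|_{\infty} = o(1)$ via the mean value theorem applied to $\sech^{\frac{N-2}{2}}$, which is the crucial technical input. Step~5 uses this to bound the leftover cross term $\int \hug[s_n]^{2^{\star}-1}(\hug[\tau_n]-\hug[0])$ by $o(1)\cdot Q(s_n)$, upgrading the error from $O(Q(s_n))$ to $o(Q(s_n))$. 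Your intuition that unimodality, positivity, and exponential decay of the correlation $a$ govern the sharpness is correct in spirit --- a Taylor expansion of $a$ near its maximum together with the exponential decay at $2s+r$ does produce $\sup_{r}(a(r)+a(2s+r)) = 1 + J(s) + O(J(s)^2)$ --- but that is a calculation you would need to carry out; the single-sentence bound you offer does not do it.
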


Following the preceding articles, we test 
$\frac{\delta(u)}{\mbox{dist} (u, \ \mathcal{M}_{\mbox{\tiny{HS}}})^2}$ against a family of test functions of the form
\be\nonumber
v_{\lambda} \coloneqq U_{\gamma} + U_{\gamma}[\lambda],\text{ as }\lambda \to 0.
\ee  The essential ideas are in spirit of K\"onig \cite{KT-2025}. For the sake of completeness, we give the proof of these results adapting the ideas of K\"onig in our context. Once again we recall the explicit expression of the $L^{2^{\star}}$-normalized HS-extremizer
\begin{align*}
\hat{U}_{\gamma}[s](t,\sigma)= \hat{U}_{\gamma}[s](t)=\frac{\sech^{\frac{N-2}{2}} (\theta(t+s))}{\int_{\R\times\mathbb{S}^{N-1}}\sech^N (\theta t)\,{\rm d}t\,{\rm d}\sigma}
\end{align*}
where ${\theta}=\frac{2\var}{N-2} = \sqrt{1 - \frac{4\gamma}{(N-2)^2}}$, as it would be useful shortly. Since under the transformation $u \mapsto \hat{u},$ all the quantities transform suitably, we have
\begin{align*}
	\di (\hat{u},\hat{\mathcal{M}}_{\mbox{\tiny{HS}}})^2 
	= \|\hat{u}\|_{\var}^2 - S_{\gamma} \hat{\mathfrak{m}}(\hat{u}),
\end{align*}
where $ \hat{\mathfrak{m}}(\hat{u}) \coloneqq \sup_{s\in\R}\left(\int_{\R\times \mathbb{S}^{N-1}}\hat{u}\hat{U}_{\gamma}[s]^{2^{\star}-1}\,{\rm d}t\,{\rm d}\sigma\right)^2.$

The following proposition provides the necessary expansion needed to prove the $2$-peak level.  We denote the weak interaction between two bubbles by
$$
Q(s)= \int_{\R\times \mathbb{S}^{N-1}}\hat{U}_{\gamma}\hat{U}_{\gamma}[s]^{2^{\star}-1}\,{\rm d}t\,{\rm d}\sigma=\int_{\R\times \mathbb{S}^{N-1}}\hat{U}_{\gamma}[s]\hat{U}_{\gamma}^{2^{\star}-1}\,{\rm d}t\,{\rm d}\sigma = \langle \hat{U}_{\gamma}, \hat{U}_{\gamma}[s] \rangle_{\var}.
$$
Then $Q(s) \approx e^{-s \var}$ as $s \rightarrow \infty$ (see Remark \ref{bub-int-cyl} below).
\begin{Prop}\label{2-peak-key-est}
Let  $\hat{v}_s = \hat{U}_{\gamma}+\hat{U}_{\gamma}[s]$. Then as $s\to \infty$ we have the following results: 
\begin{itemize}
\item[(a)] $\|\hat{v}_{s}\|_{\varepsilon}^2 = 2S_{\gamma} + 2S_{\gamma}Q(s).$
\item[(b)]  $\|\hat{v}_{s}\|_{L^{2^{\star}}(\R\times\sn)}^2= 2^{\frac{2}{2^{\star}}} + 2^{\frac{2}{2^{\star}}+1} Q(s) + o\left(Q(s)\right)$.
\item[(c)] $\di \left(\hat{v}_{s},\hat{\mathcal{M}}_{\mbox{\tiny{HS}}}\right)^2 = S_{\gamma} + o\left(Q(s)\right).$
\end{itemize}
\end{Prop}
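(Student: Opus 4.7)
The three items are obtained by a careful asymptotic expansion of $\hat v_s=\hug+\hug[s]$ as $s\to\infty$. Two structural ingredients drive the analysis. First, the cylindrical bubble satisfies the Euler--Lagrange equation $-\partial_{tt}\hug-\De_{\sn}\hug+\var^2\hug=\sg\,\hug^{2^\star-1}$; testing against $\hug$ and using $\|\hug\|_{\lts(\R\times\sn)}=1$ identifies the Lagrange multiplier as $\sg$. Combined with translation invariance this yields $\|\hug[s]\|_\var^2=\sg$ for every $s\in\R$ and the identity
\[
\langle \hug,\hug[s]\rangle_\var = \sg\int_{\R\times\sn}\hug\,\hug[s]^{2^\star-1}\,dt\,\ds = \sg\,Q(s).
\]
Second, the explicit $\sech^{(N-2)/2}(\theta t)$ profile gives the exponential decay $\hug(t)=O(e^{-\var|t|})$ together with $Q(s)\asymp e^{-\var s}$, so that the two bubbles are weakly interacting for $s$ large.

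Part (a) is then immediate by polarization:
\[
\|\hat v_s\|_\var^2=\|\hug\|_\var^2+\|\hug[s]\|_\var^2+2\langle \hug,\hug[s]\rangle_\var = 2\sg+2\sg Q(s).
\]

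For part (b), the starting point is the pointwise expansion, for $a,b\geq 0$ and $p=2^\star>2$,
\[
(a+b)^p = a^p + b^p + p\bigl(a^{p-1}b+ab^{p-1}\bigr) + R(a,b).
\]
Integrating with $a=\hug$, $b=\hug[s]$, the first four terms yield exactly $2+2\cdot 2^\star Q(s)$. That the interaction integral $\int R(\hug,\hug[s])$ is $o(Q(s))$ is the standard weak-interaction estimate for well-separated bubbles on the cylinder: one splits $\R\times\sn$ into the $\hug$-dominant region, the $\hug[s]$-dominant region, and a fixed-width transition window near $t=-s/2$; in each dominance region the one-sided Taylor remainder $|(a+b)^p-a^p-p\,a^{p-1}b|\lesssim a^{p-2}b^2+b^p$ is controlled by the $L^1$-size of the secondary bubble times its exponentially decaying value at the primary peak, while the transition region contributes $O(e^{-p\var s/2})$, which is $o(Q(s))$ since $p/2>1$. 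A first-order Taylor expansion of $x\mapsto x^{2/2^\star}$ around $x=2$ then gives
\[
\|\hat v_s\|_{\lts}^2 = \bigl(2+2\cdot 2^\star Q(s)+o(Q(s))\bigr)^{2/2^\star} = 2^{2/2^\star}+2^{2/2^\star+1}Q(s)+o(Q(s)).
\]

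For part (c), Lemma \ref{dofm} recasts the distance as
\[
\di(\hat v_s,\hat{\mathcal{M}}_{\mbox{\tiny{HS}}})^2 = \|\hat v_s\|_\var^2-\sg\,\hat\m(\hat v_s),\qquad \hat\m(\hat v_s)=\sup_{s'\in\R}\bigl(Q(s')+Q(s-s')\bigr)^2.
\]
Testing the supremum at $s'=0$ gives $\hat\m(\hat v_s)\geq(1+Q(s))^2$, which combined with (a) yields the upper bound $\di^2\leq \sg-\sg Q(s)^2 = \sg+o(Q(s))$. For the matching lower bound, note that $Q$ is smooth, even, strictly maximized at $0$ with $Q(0)=1$, $Q'(0)=0$, $Q''(0)<0$, while $Q(s),|Q'(s)|=O(e^{-\var s})$. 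Then outside any fixed neighbourhood of $\{0,s\}$ one has $Q(s')+Q(s-s')<1$ for $s$ large, so the supremum concentrates near $\{0,s\}$; near $s'=0$, a Taylor expansion of $f(s'):=Q(s')+Q(s-s')$ forces any maximizer $s^*$ to satisfy $s^*=O(e^{-\var s})$ and $f(s^*)=1+Q(s)+O(e^{-2\var s})$. Squaring gives $\hat\m(\hat v_s)=1+2Q(s)+o(Q(s))$, whence $\di^2=\sg+o(Q(s))$. The main technical obstacle is the bubble-interaction remainder in (b) and the sharp localization of the supremum in (c); both reduce to careful bookkeeping of the exponential decay on the cylinder, a familiar theme in Bahri--Coron/Rey-type interaction analyses.
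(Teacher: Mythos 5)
Your proposal is correct, and parts (a) and (b) follow essentially the same route as the paper's proof: polarization plus the Euler--Lagrange identity for (a), and a region decomposition by which bubble dominates, Taylor expansion inside each region, and interaction estimates (the paper's Lemma~\ref{BI}/Remark~\ref{bub-int-cyl}) to show the remainder is $o(Q(s))$ for (b). One small warning: your phrase ``controlled by the $L^1$-size of the secondary bubble times its value at the primary peak'' is not quite how the remainder is actually closed; what you need is the dominance inequality $b\leq a$ on the region to trade a fractional power of $b$ for the same power of $a$, pushing the smaller exponent strictly above $1$ and away from the $\log$-critical exponent $2^\star/2$ (this is exactly the ``adjusting the exponents by $\delta$'' step in the paper). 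As written, the raw bound $\int a^{p-2}b^2$ without this exponent adjustment is \emph{not} $o(Q(s))$ once $N\geq 6$, since then $\min(2^\star-2,2)=2^\star-2\leq1$. Incidentally, your coefficient $2\cdot2^{\star}Q(s)$ in $\|\hat v_s\|_{\lts}^{2^\star}$ is correct; the paper's intermediate line ``$=2+2Q(s)+o(Q(s))$'' omits the $2^\star$ factor (a typo that cancels in their subsequent Taylor step).

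For part (c), your approach is genuinely different and arguably cleaner. You observe that $\hat{\m}(\hat v_s)=\sup_{s'}\bigl(Q(s')+Q(s-s')\bigr)^2$ reduces the problem to a one-variable calculus question about the even, smooth function $Q$ with $Q(0)=1$, $Q'(0)=0$, $Q''(0)<0$, and exponential decay. Localizing the supremum near $\{0,s\}$ and Taylor-expanding forces $|s^*|\asymp e^{-\var s}$ and yields the sharp expansion directly. The paper proves the same claim \eqref{dist-est-1} by a sequence-and-compactness argument: existence of maximizers $\tau_n$, exclusion of simultaneous escape, Br\'ezis--Lieb plus equality in H\"older to pin $\tau_n\to0$, and a mean-value-theorem bound $\|\hug[\tau_n]/\hug-1\|_\infty=o(1)$ to finish. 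Your route is more elementary once $Q$'s properties are established (you should say a word justifying $Q''(0)<0$, e.g., via $Q''(0)=-(2^\star-1)\int\hug^{2^\star-2}(\hug')^2<0$), while the paper's route avoids needing any quantitative curvature of $Q$ and would adapt more readily to settings where only soft compactness is available. Both are valid.
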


\begin{proof}
	
For $(a)$ we simply expand and use the equation satisfies by $\hat{U}_{\gamma},\,\hat{U}_{\gamma}[s]$
\begin{align*}
		\|\hat{v}_{s}\|_{\varepsilon}^2 = \|\hat{U}_{\gamma}\|_{\varepsilon}^2 + \|\hat{U}_{\gamma}[s]\|_{\varepsilon}^2 + 2\langle \hat{U}_{s}, \hat{U}_{\gamma}[s]\rangle_{\varepsilon} = 2S_{\gamma} + 2S_{\gamma}Q(s).
	\end{align*}
	
\noindent
Proof of (b): We decompose $\R^N$  into two disjoint regions $\mathcal{A}=\left\{t\in\R\,:\, \hug\leq\hug[s]\right\}$ and $\mathcal{A}^c=\left\{t\in\R\,:\,\hug>\hug[s]\right\}$.
	Now we expand $\left(\hug+\hug[s]\right)^{2^{\star}}$ in $\mathcal{A}$, using Taylor's series expansion 
	\begin{align*}
		\left(\hug+\hug[s]\right)^{2^{\star}}&=\hug[s]^{2^{\star}}\left(1+\frac{\hug}{\hug[s]}\right)^{2^{\star}}\\
		&=\hug[s]^{2^{\star}}+2^{\star}\hug[s]^{2^{\star}-1}\hug+O\left(\hug[s]^{2^{\star}-2}\hug^2\right).
	\end{align*}
	We now claim the following
	\begin{align}\label{bub-est-3}
		\int_{\mathcal{A}^c} \hug[s]^{2^\star-1}\hug\,{\rm d}t\,\ds=o(Q(s))=\int_{\mathcal{A}^c} \hug[s]^{2^\star}\,{\rm d}t\,\ds.
	\end{align}
	
	Indeed using the inequality $\hug>\hug[s]$ on $\mathcal{A}^c$ and adjusting the exponents as $\left(2^{\star}-1-\delta\right)$ for $\hat{U}_{\gamma}[s]$ and $(1+\delta)$ for $\hat{U}_{\gamma}$ with $0<\delta<2^{\star}-2$ and using the interaction estimate (Remark~\ref{bub-int-cyl}) we get the first equality in \eqref{bub-est-3}. For the second equality in  \eqref{bub-est-3}, we adjust the exponents $2^{\star}$ as $2^{\star}-\delta$ and $\delta$  such that $\min\{\delta,2^{\star}-\delta\}>1$.
	 Hence we have the following estimates:
	\begin{eqnarray*}
		&&\int_{\mathcal{A}} \hug[s]^{2^\star}{\rm d}t\,\ds=1-\int_{\mathcal{A}^c} \hug[s]^{2^\star}\,{\rm d}t\,\ds=1+o(Q(s)),\\
		&&\int_{\mathcal{A}} \hug[s]^{2^\star-1}\hug\,{\rm d}t\,\ds=Q(s)-\int_{\mathcal{A}^c} \hug[s]^{2^\star-1}\hug\,{\rm d}t\,\ds = Q(s)+o(Q(s)),\\
		&&\int_{\mathcal{A}} \hug[s]^{2^\star-2}\hug^2\,{\rm d}t\,\ds=o(Q(s)).
	\end{eqnarray*}
	Similarly we estimate in the region $\mathcal{A}^c$.
	Adding the estimates over the partitions $\mathcal{A}$ and $\mathcal{A}^c$, we get $\|\hat{v}_s\|_{L^{2^{\star}}}^{2^{\star}} = 2 + 2Q(s) + o(Q(s)).$ This combined with $(2+2\eta)^{\frac{2}{2^\star}}=2^{\frac{2}{2^\star}}+\frac{2^{\frac{2}{2^\star}+1}}{2^\star}\eta+o(\eta)$ for $\eta >0$ small, we get the desired result.
	\medspace

\noindent
Proof of (c): We claim:
	\be\label{dist-est-1}
	\hat{\mathfrak{m}}(\hat{v}_s) = \left(1+Q(s) + o(Q(s))\right)^2, 
	\ee
	as $s\to\infty$. This together with part $(a)$ of this proposition gives the desired result
	\begin{eqnarray*}
		\di (\hat{v}_s,\hat{\mathcal{M}}_{\mbox{\tiny{HS}}})^2  &=& \|\hat{v}_s\|_{\varepsilon}^2 - S_{\gamma}\hat{\mathfrak{m}}(\hat{v}_s) \\
		&=& 2S_{\gamma} - S_{\gamma} \left(1+Q(s)+o(Q(s))\right)^2 + 2S_{\gamma} Q(s)\\
		&=& S_{\gamma} + o(Q(s)).
	\end{eqnarray*}	 
We divide proof the claim in several small steps.
	
	\noindent {\bf Step~1:} It follows from Lemma~\ref{dofm} that corresponding to each $s\in\R$ there exists $\tau\equiv \tau(s)\in\R$ such that 
	\be\nonumber
	\hat{\mathfrak{m}}(\hat{v}_s) = \left(\int_{\R\times\mathbb{S}^{N-1}}\hat{v}_s\hat{U}[\tau]^{2^{\star}-1}\,{\rm d}t\,{\rm d}\sigma\right)^2.
	\ee

	\medskip
	
	\noindent {\bf Step~2:} Let $s_n\to\infty$ as $n\to\infty$ and $\tau_n$ be the maximizer corresponding to $\hat{\mathfrak{m}}(\hat{v}_{s_n})$, then $|\tau_n|\to \infty$ and $|s_n-\tau_n|\to\infty$ can not happen simultaneously. 
	Otherwise, $\hat{\mathfrak{m}}(\hat{U}_{\gamma}[\tau_n])\to 0$ as $n\to\infty$ contradicting the fact that $\hat{\mathfrak{m}}(\hat{U}_{\gamma}[\tau_n])=1$ for each $n\in\mathbb{N}$.
	\medskip
	
	\noindent {\bf Step~3:} In this step we show that, either $\tau_n\to 0$ or $|\tau_n - s_n|\to 0$ as $n\to\infty$.
	First using the change of variable, we observe that
	\be\nonumber
	\int_{\R\times\mathbb{S}^{N-1}} \hat{U}_{\gamma}[s_n]\hat{U}_{\gamma}[\tau_n]^{2^{\star}-1}\,{\rm d}t\,{\rm d}\sigma = 	\int_{\R\times\mathbb{S}^{N-1}} \hat{U}_{\gamma}[s_n-\tau_n]\hat{U}_{\gamma}[0]^{2^{\star}-1}\,{\rm d}t\,{\rm d}\sigma.
	\ee
	Thus without loss of generality, let $\{\tau_n\}\subset \R$ be bounded and up to a subsequence (still denoted by $\tau_n$), $\tau_n\to\tau_0$ as $n\to\infty$. Then Br\'ezis-Lieb lemma yields,  $$\hat{U}_{\gamma}[\tau_n]\to\hat{U}_{\gamma}[\tau_0] \text{ in } L^{2^{\star}}(\R\times \mathbb{S}^{N-1}).$$
	Therefore 
	\begin{eqnarray*}
		1+ \int_{\R\times\mathbb{S}^{N-1}} \hat{U}_{\gamma}[s_n]\hat{U}_{\gamma}[0]^{2^{\star}-1}\,{\rm d}t\,{\rm d}\sigma &\leq& \int_{\R\times\mathbb{S}^{N-1}} \hat{U}_{\gamma}[s_n]\hat{U}_{\gamma}[\tau_n]^{2^{\star}-1}\,{\rm d}t\,{\rm d}\sigma\\
		&&\qquad + \int_{\R\times\mathbb{S}^{N-1}} \hat{U}_{\gamma}[0]\hat{U}_{\gamma}[\tau_n]^{2^{\star}-1}\,{\rm d}t\,{\rm d}\sigma\\
		&\leq& 1+o(1).
	\end{eqnarray*}
	Letting $n \rightarrow \infty$ we conclude
	\be\nonumber
	\int_{\R\times\mathbb{S}^{N-1}} \hat{U}_{\gamma}[0]\hat{U}_{\gamma}[\tau_0]^{2^{\star}-1}\,{\rm d}t\,{\rm d}\sigma =1,
	\ee
	and hence $\tau_0=0$ from the equality cases in H\"older's inequality. 
	
	\medskip
	
	\noindent {\bf Step~4:} Next we show that $\left\|\frac{\hat{U}_{\gamma}[\tau_n]}{\hat{U}_{\gamma}[0]}-1\right\|_{\infty} = o(1)$ when $s_n\to\infty$ and $\tau_n\to 0$ as $n\to\infty$.
	
By Mean-value theorem  for each $n\in\mathbb{N}$ and $t,$ there exists some $\xi_n\in \left(t-|\tau_n|,t+|\tau_n|\right)$ such that
	\begin{eqnarray*}
		\left|\frac{\hat{U}_{\gamma}[\tau_n](t)}{\hat{U}_{\gamma}[0](t)}-1\right| &=& \left|\frac{\sech^{\frac{N-2}{2}} \left(\theta(t+\tau_n)\right) - \sech^{\frac{N-2}{2}} \left(\theta t\right)}{\sech^{\frac{N-2}{2}} \left(\theta t\right)}\right|\\
		&\leq&\frac{N-2}{2}\left| \frac{\sech({\theta}\xi_n)}{\sech({\theta}t)}\right|^{\frac{N-2}{2}}\left|\tanh \left(\theta \xi_n \right)\right||\tau_n|\leq C|\tau_n| = o(1) 
	\end{eqnarray*}
where $C>0$ independent of $n$ and $t$.
 
 \medskip
 	
	\noindent {\bf Step~5:} In this step we show that $$\left(\hat{\mathfrak{m}}\left(\hat{U}_{\gamma} + \hat{U}_{\gamma}[s_n]\right)\right)^{\frac{1}{2}} = \left(1+Q(s_n)+o(Q(s_n))\right). $$ 
	We notice that
	\begin{eqnarray*}
		\hat{\mathfrak{m}}\left(\hat{v}_{s_n}\right)^{\frac{1}{2}} &=& \int_{\R\times \mathbb{S}^{N-1}} \left(\hat{U}_{\gamma}[s_n]^{2^{\star}-1}+\hat{U}_{\gamma}[0]^{2^{\star}-1}\right) \hat{U}_{\gamma}[\tau_n]\,{\rm d}t\,{\rm d}\sigma\\
		&=& \int_{\R\times \mathbb{S}^{N-1}} \hat{U}_{\gamma}[0]^{2^{\star}-1}\hat{U}_{\gamma}[\tau_n]\,{\rm d}t\,{\rm d}\sigma +  \int_{\R\times \mathbb{S}^{N-1}} \hat{U}_{\gamma}[s_n]^{2^{\star}-1}\hat{U}_{\gamma}[0]\,{\rm d}t\,{\rm d}\sigma\\
		&&\qquad+ \int_{\R\times \mathbb{S}^{N-1}} \hat{U}_{\gamma}[s_n]^{2^{\star}-1}\left(\hat{U}_{\gamma}[\tau_n]-\hat{U}_{\gamma}[0]\right)\,{\rm d}t\,{\rm d}\sigma\\
		&\leq& 1+ Q(s_n) + o(1) \int_{\R\times \mathbb{S}^{N-1}} \hat{U}_{\gamma}[s_n]^{2^{\star}-1}\hat{U}_{\gamma}[0]\,{\rm d}t\,{\rm d}\sigma\\
		&=& 1+Q(s_n)+ o(Q(s_n)).
	\end{eqnarray*}
	
	Moreover, \be\nonumber
	1+Q(s_n)= \int_{\R\times\sn}\hat{v}_{s_n} \hat{U}_{\gamma}^{2^{\star}-1}\,{\rm d}t\,{\rm d}\sigma \leq \hat{\mathfrak{m}}\left(\hat{U}_{\gamma} +\hat{U}_{\gamma}[s_n]\right)^{\frac{1}{2}} \leq 1+Q(s_n)+ o(Q(s_n)).
	\ee
	Therefore,
	\be\label{dist-est-2}
	\hat{\mathfrak{m}}\left(\hat{v}_{s_n}\right)^{\frac{1}{2}} = 1+Q(s_n)+o(Q(s_n)).
	\ee
Hence we have the desired claim.	
	
\end{proof}

\noindent	
{\bf Proof of Theorem~\ref{2-peak-th}}
\begin{proof} Using the results in Proposition~\ref{2-peak-key-est} we get
\begin{eqnarray*}
\frac{\delta(\hat{v}_s)}{\mbox{dist} (\hat{v}_s, \mathcal{M}_{\mbox{\tiny{HS}}})^2} &=& \frac{\|\hat{v}_{s}\|_{\varepsilon}^2-S_{\gamma}\|\hat{v}_{s}\|_{L^{2^{\star}}}^2}{\di\left(\hat{v}_{s},\hat{\mathcal{M}}_{\mbox{\tiny{HS}}}\right)^2}\\
&=& \frac{2S_{\gamma}+2S_{\gamma}Q(s)-S_{\gamma}\left(2^{\frac{2}{2^{\star}}}+ 2^{\frac{2}{2^{\star}}+1}Q(s) +o(Q(s))\right)}{S_{\gamma}+ o(Q(s))}\\
&<& 2-2^{\frac{2}{2^{\star}}},
\end{eqnarray*}
for $s>0$ large enough. This completes the proof.

\end{proof}
\section{Proof of Bianchi-Egnell extremizer}\label{proofs}

This section deals with the existence of the minimizer attaining $\cbe(\gamma).$ By Proposition \ref{constantcber}, $\cber(\gamma)$ is constant and by \cite[Proposition 4.1]{WeiWu-2024}   
\begin{align*}
\cbe(\gamma) \leq \cber(\gamma) < \min \ \{\cbe^{\mbox{\tiny{loc}}}(\gamma), \cbe^{\mbox{\tiny{2-peak}}}(\gamma)\} \ \ \ \mbox{for}\ \ \ \gamma\in\left[\gamma_c^{\star},\frac{(N-2)^2}{4}\right),
\end{align*}
as in either cases the test functions used are radial (radial third eigenfunction for $\cbel(\gamma)$ and sum of two weakly interacting HS-extremizer for $\cbe^{\mbox{\tiny{2-peak}}}(\gamma)$). By definition 
\eqref{thegamma}, the same strict inequality persists for $\gamma > \gamma_0.$

\medskip

Let $\{u_k\}$ be a minimizing sequence of $\cbe(\gamma)$, and normalized $\|u_k\|_{\lts}=1$. Then
\begin{align*}
\|u_k\|_{\ga}^2- S_{\gamma}=\cbe(\gamma) \left(\|u_k\|_{\ga}^2-\sg \m(u_k)\right)+o(1).
\end{align*} 
Simplifying and neglecting the positive quantity $\m(u_k)$, we have, 
\begin{align*}
\|u_k\|_{\gamma}^2\leq \frac{S_{\gamma}}{1-\cbe(\gamma)}+o(1),
\end{align*}
and hence $\{u_k\}$ is bounded in $H^1(\rn)$. The main difficulty is to extract a non-zero weak limit as 
 the Hardy-Sobolev inequality is not translation invariant. On the other hand, by concentration compactness principle of Lions \cite{LP1-1984, LP2-1984}, every bounded sequence in $H^1$, up to translation and dilation, admits a non-zero weak limit. For radial bounded sequence in $H^1$ though we can recover the non-zero weak limit by dilation, but in general, it is not true. It is our first goal of this sub-section to extract a non-zero weak limit.

\subsection{Non-zero weak limit}

We show that if $\cbe(\gamma)$ is strictly below the hidden critical level,  $\cbe(\ga)<1-\frac{S_{\ga}}{S}$, then a minimizing sequence for $\cbe(\ga)$ admits (up to dilations) a nonzero weak limit. For our convenience, in this section we work with the cylindrical co-ordinates. Recall for $\phi \in H^1(\R^N),$ we can lift to $\hat \phi \in H^1(\R \times \sn)$ according to the rule described in section \ref{isometric lifting}.
 
 We recall a few notations:  $\tau_s\hat{\phi}(t,w) \coloneqq \hat{\phi}(t-s,w)$ denotes the translation of $\hat{\phi}$ in the $\R$-variable. For $i \in \mathbb{N} \cup \{0\},$ let $D(i)$ be the dimension of spherical harmonics of degree $i$ in $\sn.$ 
 We fix a $L^2$-orthonormal basis $\{Y_j^i\}_{j=1}^{D(i)}$ for the spherical harmonics of degree $i$, and we let $P_{ij}$ be the $L^2$-projection of $H^1(\R \times \sn)$ onto the subspace $H^1(\R)Y_j^i$.
\begin{Lem}{\label{Non-zero weak limit}}
	Suppose $\cbe(\ga)<1-\frac{S_{\ga}}{S}$ holds, and let $\left\{\hat{\phi}_n\right\}\subset H^1(\R\times\sn)\setminus \hat{\mathcal{M}}_{\mbox{\tiny HS}}$ be a minimizing sequence for $\cbe(\ga)$ satisfying $\|\hat{\phi}_n\|_{\lts}=1$ for every $n\in \mathbb{N}$. 
	
	\medskip
	
	Then there exists a sequence $\{s_n\}\subset\R$ and $0\not\equiv\hat{\phi}\in H^1(\R\times\sn)$ such that (up to a subsequence) 
	\begin{align*}
	\tau_{s_n}\hat{\phi}_n\rightharpoonup \hat{\phi} \ \ \mbox{in} \ H^1(\R\times\sn).
	\end{align*}
\end{Lem}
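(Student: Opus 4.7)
The plan is to argue by contradiction, working in cylindrical coordinates where the problem is translation invariant in $t$. For a fixed $R>0$, set
\[
\alpha_n := \sup_{s\in\R}\int_{s-R}^{s+R}\int_{\sn}|\hat\phi_n|^2\,{\rm d}t\,{\rm d}\sigma,
\]
pick $s_n$ as near-maximizers and set $\hat\psi_n := \tau_{s_n}\hat\phi_n$. Boundedness of $\{\hat\psi_n\}$ in $H^1(\R\times\sn)$ allows, after passing to a subsequence, $\hat\psi_n\rightharpoonup \hat\phi$. If $\liminf_n\alpha_n>0$, then by construction $\int_{-R}^R\int_{\sn}|\hat\psi_n|^2\,{\rm d}t\,{\rm d}\sigma \to \ell > 0$, and Rellich on the compact slab $(-R,R)\times\sn$ upgrades this to $L^2$-convergence there, giving $\hat\phi\not\equiv 0$. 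The task is therefore to exclude the alternative $\alpha_n\to 0$.

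Suppose $\alpha_n\to 0$. A Lions-type vanishing lemma on $\R\times\sn$ yields $\hat\psi_n\to 0$ strongly in $L^p(\R\times\sn)$ for every $p\in(2,2^\star)$. Combined with $\|\hat\psi_n\|_{\lts}=1$, this forces concentration at a point: a Struwe-type profile decomposition on the cylinder produces points $p_n=(t_n,\sigma_n)\in\R\times\sn$ and scales $\mu_n\to 0^+$ such that, in local coordinates identifying a neighbourhood of $p_n$ with a ball in $\R^N$,
\[
\mu_n^{\frac{N-2}{2}}\hat\psi_n(\mu_n\,\cdot\,+p_n) \rightharpoonup W\not\equiv 0
\]
for some Aubin-Talenti bubble $W$. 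The scale condition $\mu_n\to 0$ (as opposed to $\mu_n=O(1)$, which would correspond to a translate of $\hug$) is precisely what the hypothesis $\alpha_n\to 0$ encodes: a unit-scale profile would give $\alpha_n=O(1)$.

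Pushing forward to $\R^N$ via $x=e^{-t}\sigma$, the bubble concentrates at $x_n=e^{-t_n}\sigma_n$ on $\R^N$-scale $\lambda_n\sim\mu_n^{-1}e^{-t_n}$, so $\lambda_n|x_n|\sim\mu_n^{-1}\to\infty$; the bubble therefore sits asymptotically far from the origin on its own scale and the Hardy term $\gamma\int_{\R^N}|\psi_n|^2/|x|^2\,{\rm d}x$ is negligible against $\int|\nabla\psi_n|^2$. Consequently $\|\hat\psi_n\|_\var^2\to S$. Moreover $\hat{\m}(\hat\psi_n)\to 0$: every $\hug[s]^{2^\star-1}$ is a uniformly bounded, unit-scale profile, and its pairing against the concentrating sequence vanishes by H\"older once one localizes to the shrinking support where $\hat\psi_n$ carries its $L^{2^\star}$-mass. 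Lemma~\ref{dofm} then gives
\[
\frac{\|\hat\psi_n\|_\var^2-\sg\|\hat\psi_n\|_{\lts}^2}{\|\hat\psi_n\|_\var^2-\sg\,\hat{\m}(\hat\psi_n)} \longrightarrow \frac{S-\sg}{S} = 1-\frac{\sg}{S},
\]
contradicting $\cbe(\gamma)<1-\sg/S$.

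The main obstacle will be making the profile decomposition on $\R\times\sn$ rigorous and, crucially, ruling out HS-type profiles (unit-scale bubbles) when $\alpha_n\to 0$: this is the step that converts the local $L^2$-vanishing hypothesis into the genuinely Sobolev regime and unlocks the bound $1-\sg/S$. A secondary technical issue is verifying $\hat{\m}(\hat\psi_n)\to 0$ along the concentrating sequence, which requires carefully exploiting the scale separation between $\hug[s]$ and the concentrating profile together with suitable H\"older-type cutoff estimates.
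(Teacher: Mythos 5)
Your approach takes a genuinely different route from the paper, and it points at the right mechanism, but as written it contains gaps — some of which you flag yourself — that are not easy to dismiss.

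The paper avoids any profile decomposition. It expands $\hat\phi_n$ in spherical harmonics and shows (Step~1) that for a suitable $k_0$ some low-frequency mode $P_{kj}\hat\phi_n$ with $k\le k_0$ keeps $L^{2^\star}$-mass bounded away from zero: otherwise, exploiting $\hat{\mathfrak{m}}(\hat\phi_n)=\hat{\mathfrak{m}}(P_{01}\hat\phi_n)\to 0$ and the improved Hardy inequality (Lemma~\ref{Imp-Har}) applied to the high-frequency remainder, one contradicts $\cbe(\gamma)<1-\tfrac{\sg}{S}$ directly. Step~2 then feeds the surviving one-dimensional coefficient $g_n$ into a one-dimensional Gagliardo--Nirenberg/Lions argument and lifts the resulting translation back to $\R\times\sn$. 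This is a spectral argument; it needs no Struwe machinery.

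Your alternative — a Lions vanishing dichotomy in $t$ followed by a Struwe-type profile decomposition on the cylinder — could in principle be closed, but two of your central assertions are unproved as stated. First, you claim $\|\hat\psi_n\|_\var^2\to S$. That is neither true in general nor what you need; the useful statement is $\liminf_n\|\hat\psi_n\|_\var^2\ge S$, and since
$\|\hat\psi_n\|_\var^2=\|\hat\psi_n\|_{H^1(\R\times\sn)}^2-\gamma\|\hat\psi_n\|_{L^2(\R\times\sn)}^2,$
you need control on the \emph{global} $L^2$-norm. But $\alpha_n\to0$ yields $L^p$-vanishing only for $p\in(2,2^\star)$, not $p=2$, so $\|\hat\psi_n\|_{L^2}$ need not vanish. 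To salvage this one must run the full decomposition: argue that only shrinking-scale profiles occur (a unit-scale profile would feed $\alpha_n$), that each shrinking profile has $o(1)$ cylinder $L^2$-norm, and that the residual $r_n$ contributes a nonnegative $\|r_n\|_\var^2$, so that
$\|\hat\psi_n\|_\var^2 = \sum_j\|W_j\|_\var^2 + \|r_n\|_\var^2 + o(1)\ \ge\ S\sum_j\|W_j\|_{\lts}^2 + o(1)\ \ge\ S\Bigl(\sum_j\|W_j\|_{\lts}^{2^\star}\Bigr)^{2/2^\star}+o(1)= S+o(1).$
None of this is carried out in your sketch. Second, you do not justify the existence or form of such a profile decomposition on $\R\times\sn$ for a merely locally-$L^2$-vanishing sequence, nor do you treat the multi-profile case or the residual's $L^2$-mass. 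The verification that $\hat{\mathfrak{m}}(\hat\psi_n)\to 0$ via subcritical $L^p$-vanishing and H\"older is fine.

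In short, you correctly identify that concentration away from the $\sg$-scale pins the quotient at $1-\tfrac{\sg}{S}$, which the hidden level forbids; but you assert more than you prove, and you need the full concentration-compactness apparatus to close the argument. The paper's spherical-harmonics/improved-Hardy route reaches the same destination with substantially lighter tools.
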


\begin{proof}
	We break the proof into two steps. 
	By assumption $\frac{S_{\ga}}{S}< 1- \cbe (\ga),$ and therefore, a $k_0\in\mathbb{N}$ exists such that
	\begin{equation}\label{ES:1}
		\frac{S_{\ga}}{\left(1-\frac{\ga}{\frac{(N-2)^2}{4} +k_0(N-2+k_0)}\right)S} < 1-\cbe (\ga).
	\end{equation}

	\noindent
	{\bf Step 1:} There exists a $\delta>0$ and $1 \leq k \leq k_0$ such that
	\be\nonumber
	\liminf_{n\to\infty} \left\| P_{kj}\hat{\phi}_n\right\|_{\lts}^2 > \delta, \qquad\text{for some }j\in\{1,\cdots, D(k)\}.
	\ee
	 If we write $\hat{\phi}= \sum_{i=0}^{\infty}\sum_{j=0}^{D(i)}\phi_{ij}Y^{i}_{j}$ then by definition $P_{ij}\hat{\phi} = \phi_{ij}Y^i_j$. 
	Now assume that $\left\|P_{ij}\hat{\phi}_n\right\|_{\lts} \to 0$ as $n\to\infty$, for every $i\in\{1,\cdots, k_0\},\,j\in\{1,\cdots, D(i)\}$. We decompose $\hat \phi_n = \hat \varphi_n + \hat \psi_n,$ where $\hat \varphi_n = \sum_{i=0}^{k_0}\sum_{j=1}^{D(i)}P_{ij}\hat{\phi}_n.$ Then 
\begin{align*}
\lim_{n \rightarrow \infty} \|\hat \varphi_n\|_{\lts} = 0, \ \ \lim_{n \rightarrow \infty} \|\hat \psi_n\|_{\lts} = 1. 
\end{align*}

Then by improved Hardy's inequality applied to $\hat \psi_n$ (see Lemma~\ref{Imp-Har} in the appendix) we get
\begin{align}\label{ES:2}
\|\hat{\phi}_n\|_{\var}^2 \geq \|\hat{\psi}_n\|_{\var}^2 \geq \left(1-\frac{\ga}{\frac{(N-2)^2}{4} +k_0(N-2+k_0)}\right)\|\hat{\psi}_n\|_{H^1(\R\times\sn)}^2.
\end{align}

Since $\hat{\mathfrak{m}}(\hat{\phi}_n) = \hat{\mathfrak{m}}(P_{01}\hat{\phi}_n)$ and  since $\|P_{01}\hat{\phi}_n\|_{\lts}^2\to 0$ as $n\to\infty$ we get
	 \be\nonumber
	 \hat{\mathfrak{m}}\left(P_{01}\hat{\phi}_n\right) \leq \|P_{01}\hat{\phi}_n\|_{\lts}^2 \to 0,\quad \text{as }n\to\infty. 
	 \ee%= \sum_{i=0}^{\infty}\sum_{j=1}^{D(i)} \hat{\mathfrak{m}}\left(\phi_{ij}Y^i_j\right) = \sum_{i=0}^{\infty}\sum_{j=1}^{D(i)} \hat{\mathfrak{m}}\left(P_{ij}\phi\right)
	
	 Using the minimizing property of $\{\hat{\phi}_n\}$, we get
	 
		 \begin{eqnarray*}
	 	\cbe (\ga) + o(1) &=& \frac{\|\hat{\phi}_n\|_{\var}^2-S_{\ga}\|\hat{\phi}_n\|_{\lts}^2}{\|\hat{\phi}_n\|_{\var}^2-S_{\ga}\hat{\mathfrak{m}}\left(P_{01}\hat{\phi}_n\right)}\nonumber\\
	 	&=& 1- \frac{S_{\ga}\|\hat{\phi}_n\|_{\lts}^2}{\|\hat{\phi}_n\|_{\var}^2+o(1)} +o(1),
	\end{eqnarray*}
Therefore,	
\begin{align*}
	\qquad \frac{S_{\ga}}{\|\hat{\phi}_n\|_{\var}^2} = 1-\cbe(\ga) +o(1).
\end{align*}	 	
	 	
By \eqref{ES:2} and since $\|\hat{\psi}\|_{H^1(\R\times\sn)}=\|\nabla \psi\|_{2},$
using Sobolev inequality, we get
\begin{align*}	
\frac{S_{\ga}}{\left(1-\frac{\ga}{\frac{(N-2)^2}{4}+k_0(N-2+k_0)}\right)S\|\hat{\psi}_n\|_{\lts}^2} \geq 1-\cbe(\ga) +o(1).
\end{align*}
Since $\|\hat{\psi}_n\|_{\lts}\to 1$ therefore taking limit as $n\to\infty$ we get a contradiction to \eqref{ES:1}.
This completes the proof of Step~1.

\medspace

{\bf Step~2:} In this step we show that, up to a subsequence, there exists $\{s_n\}\subset \R$ such that $\tau_{s_n}\hat\phi_n\not\rightharpoonup 0$ as $n\to\infty$. By Step~1, without loss of generality, we assume 
\begin{align*}
	&\quad \liminf_{n\to\infty}\left\|P_{11}\hat{\phi}_n\right\|_{\lts}^2 \geq \delta_0>0.
\end{align*}
For simplicity of notations we denote $P_{11}\hat{\phi}_n = g_nY_1^1,$ and  therefore
\begin{align*}
 \left\| g_n\right\|_{L^{2^{\star}}(\R)}^2 \geq \frac{\delta_0}{\|Y^1_1\|_{L^2(\sn)}^2}.
\end{align*}

Hence, by  Lion's lemma \cite{LP1-1984, LP2-1984}  up to translation $\tau_{s_n}$ in $\R$ variable, $
\tau_{s_n}(g_n)\not\rightharpoonup 0$ in $H^1(\R)$.

Indeed, by one dimensional Gagliardo-Nirenberg-Sobolev inequality (see Dolbeault et.al \cite[equation (1.1), together with interpolation inequality]{DELL-2014}), for every $p>2$ and for a fixed $R>0$ and $g\in H^1(\R)$ 
\begin{align}\label{1dimGNS}
\|g\|_{L^p(B_R(s))}\leq C_R(p) \|g\|_{\lt(B_R(s))}^{\eta}\|g\|_{H^1(B_R(s))}^{1-\eta},
\end{align} 
where $\eta=\frac{2}{p(p-1)},$ $C_R(p)$ is a constant depending on $R$ and $p$ and independent of the centre $s$ and $B_R(s)$ is an open interval in $\R$ with radius $R$ and centre $s$. 

We apply \eqref{1dimGNS} for $p = 2^{\star}$ to $g_n$, and since $2^{\star}$ is subcritical for $N=1$ and adapting Lion's Lemma \cite{LP1-1984,LP2-1984}  (see also Willem \cite[Lemma 1.21]{W-1996}) we conclude  $\displaystyle\liminf_{k\to\infty} \displaystyle\sup_{s\in \R} \int_{B_R(s)}  g_n^2 > \delta^{\prime} >0$. Hence, there exist a sequence $\{s_n\}$ such that 
\begin{align*}
\int_{B_R(s_n)}   g_n^2\ {\rm d}t>\delta^{\prime}.  
\end{align*}
By abuse of notation we denote the translated sequence $\tau_{s_n}\hat{\phi}_n$ also by  $\hat \phi_n$. 
By weak continuity, we conclude the weak limit $\hat\phi \not\equiv 0,$ and  completes the proof.
\end{proof}	 

\medskip

Once we have non-zero weak limit, we can follow the proof of Tobias K\"onig \cite{KT-2025} or the CKN-version of Wei-Wu \cite{WeiWu-2024} to find the existence of minimizer.

 We now mimic K\"{o}nig's argument and decompose $u_k=v+f_k,$ then both $H^1$-norm and $L^{2^{\star}}$-norm asymptotically decomposes:
\begin{align*}
\|u_k\|_{\ga}^2 = \|v\|_{\ga}^2+ \|f_k\|_{\ga}^2 + o(1), \ \ \|u_k\|_{L^{2^{\star}}}^{2^{\star}} = \|v\|_{L^{2^{\star}}}^{2^{\star}} + \|f_k\|_{L^{2^{\star}}}^{2^{\star}} + o(1).
\end{align*}

\begin{Lem}
As $k\to\infty$, we have,
\begin{align*}
\m(u_k)=\max \ \{\m(v),\m(f_k)\}+o(1).
\end{align*}
In particular,
\begin{align*}
\di (u_k,\mhs)^2=\|v\|_{\ga}^2+\|f_k\|_{\ga}^2-\sg\max\{\m(v),\m(f_k)\}+o(1).
\end{align*}
\end{Lem}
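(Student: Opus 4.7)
The plan is to establish matching asymptotic inequalities $\m(u_k) \le \max\{\m(v),\m(f_k)\} + o(1)$ and $\m(u_k) \ge \max\{\m(v),\m(f_k)\} + o(1)$; the ``In particular'' clause then follows immediately from Lemma~\ref{dofm}, which gives $\di(u_k,\mhs)^2 = \|u_k\|_{\ga}^2 - \sg\, \m(u_k)$, combined with the Brezis--Lieb splitting $\|u_k\|_{\ga}^2 = \|v\|_{\ga}^2 + \|f_k\|_{\ga}^2 + o(1)$ stated just above.

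For the upper bound, let $\lambda_k > 0$ be a maximizer for $\m(u_k)$, guaranteed by Lemma~\ref{dofm}. Along a subsequence, either (i) $\lambda_k \to \lambda_0 \in (0,\infty)$, or (ii) $\ln\lambda_k \to \pm\infty$. In case (i), $\ug[\lambda_k] \to \ug[\lambda_0]$ strongly in $\lts(\rn)$, hence $\ug[\lambda_k]^{2^{\star}-1} \to \ug[\lambda_0]^{2^{\star}-1}$ strongly in $L^{2^{\star}/(2^{\star}-1)}(\rn)$; together with $f_k \rightharpoonup 0$ in $\lts(\rn)$ this yields $\int \ug[\lambda_k]^{2^{\star}-1} f_k\,{\rm d}x \to 0$, and
\begin{align*}
\m(u_k) = \left(\int \ug[\lambda_k]^{2^{\star}-1} v \,{\rm d}x\right)^{\!2} + o(1) \le \m(v) + o(1).
\end{align*}
In case (ii), $\ug[\lambda_k] \to 0$ a.e.\ and remains bounded in $\lts(\rn)$, whence $\ug[\lambda_k]^{2^{\star}-1} \rightharpoonup 0$ weakly in $L^{2^{\star}/(2^{\star}-1)}(\rn)$; since $v \in \lts(\rn)$, the cross term $\int \ug[\lambda_k]^{2^{\star}-1} v\,{\rm d}x$ vanishes, so $\m(u_k) \le \m(f_k) + o(1)$.

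For the lower bound, the estimate $\m(u_k) \ge \m(v) + o(1)$ follows by testing against a fixed maximizer $\lambda^{(v)}$ of $\m(v)$ supplied by Lemma~\ref{dofm} (the case $\m(v)=0$ being vacuous): since $\ug[\lambda^{(v)}]^{2^{\star}-1} \in L^{2^{\star}/(2^{\star}-1)}$ and $f_k \rightharpoonup 0$,
\begin{align*}
\left(\int \ug[\lambda^{(v)}]^{2^{\star}-1} u_k \,{\rm d}x\right)^{\!2} \longrightarrow \m(v),
\end{align*}
and $\m(u_k)$ dominates the left-hand side by definition of the supremum. For $\m(u_k) \ge \m(f_k) + o(1)$, pick a maximizer $\lambda_k^{(f)}$ for $\m(f_k)$ and repeat the dichotomy: if $\lambda_k^{(f)}$ stays in a compact subset of $(0,\infty)$, the same strong-convergence argument forces $\m(f_k) \to 0$ and the inequality is automatic; if $\ln\lambda_k^{(f)} \to \pm\infty$, then weak vanishing gives $\int \ug[\lambda_k^{(f)}]^{2^{\star}-1} v \,{\rm d}x \to 0$, so $\left(\int \ug[\lambda_k^{(f)}]^{2^{\star}-1} u_k \,{\rm d}x\right)^{2} = \m(f_k) + o(1)$, and the bound follows.

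The only genuinely delicate point is the weak-vanishing statement $\ug[\lambda_k]^{2^{\star}-1} \rightharpoonup 0$ in $L^{2^{\star}/(2^{\star}-1)}(\rn)$ when $\ln \lambda_k \to \pm\infty$; this follows from a.e.\ convergence together with $L^{2^{\star}/(2^{\star}-1)}$-boundedness, and encodes the dilation invariance of the problem at the level of the profile decomposition, which is precisely what forces the $\max$ structure (rather than a sum) on the right-hand side.
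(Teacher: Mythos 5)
Your proof is correct and matches the argument of K\"onig to which the paper defers (the paper simply cites \cite{KT-2025} and omits the proof): the key dichotomy on the maximizing scale $\lambda_k$ — compact versus $|\ln\lambda_k|\to\infty$ — together with strong convergence of the bubble profile in $L^{(2^\star)'}$ in the compact case and weak vanishing of $\ug[\lambda_k]^{2^\star-1}$ in the escaping case, is exactly the mechanism K\"onig uses. The technical inputs you flag (that $f_k\rightharpoonup 0$ in $L^{2^\star}$ follows from $H^1$-weak convergence via the bounded Sobolev embedding, and that a.e.\ convergence plus $L^{(2^\star)'}$-boundedness gives weak vanishing) are correctly identified and standard.
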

\begin{proof}
The proof follows \cite{KT-2025} in verbatim, hence we omit it.
\end{proof}

\noindent
{\bf{Proof of Theorem \ref{main2}}} 
\begin{proof}
First we show $\cbe(\gamma) < 1 - \frac{S_{\gamma}}{S}.$
We recall that $\cbe(\gamma)\leq \cber(\gamma)$.
Assume first $N\geq 4$.
In this case, by Lemma \ref{cbel less hidden level}
\begin{align*}
\cbe(\gamma) \leq \cber(\gamma) \leq \Lambda(\gamma) < 1 - \frac{S_{\gamma}}{S},
\end{align*}
for all $\gamma_0 \leq \gamma < \frac{(N-2)^2}{4}$ with strict inequality if $\gamma_0 < \gamma.$ If $\cbe(\gamma_0) = \cber(\gamma_0)$ then there is nothing to prove. Therefore, with out loss of generality we can assume strict inequality  $\cbe(\gamma) < \Lambda(\gamma)$ holds for all $\gamma_0 \leq \gamma < \frac{(N-2)^2}{4}.$
Now consider $N=3$. Recall in this case $\gamma_0 = \left(1-\left(\frac{N}{N+4}\right)^{\frac{N}{N-1}}\right)\frac{(N-2)^2}{4},$ and again by lemma \ref{cbel less hidden level}, $\Lambda(\gamma)<1-\frac{S}{S_{\gamma}}$ holds for $\gamma\in \left[\gamma_0, \frac{(N-2)^2}{4}\right)$ and hence $\cbe(\gamma)<1-\frac{S_{\gamma}}{S}$ holds. By Lemma \ref{Non-zero weak limit} we conclude the weak limit $v \not\equiv 0.$ We now prove $f_k \to 0$ in $H^1(\R^N)$. If not then (up to a subsequence) $\|f_k\|_{\gamma}^2\geq \epsilon$. 
 \medskip
 
 \noindent
{\bf{Case 1:}}
\begin{align}\label{case1}
\m(v)\geq \m(f_k)+o(1).
\end{align}
Then $\di(u_k,\mhs)^2=\|v\|_{\ga}^2+\|f_k\|_{\ga}^2-S_{\ga}\m(v).$

\medskip

\noindent
{\bf{Subcase 1(a):}}
\begin{align*}
\lim_{k\to\infty}\|f_k\|_{\lts}\leq \|v\|_{\lts}.
\end{align*}
Multiplying $u_k$ by a non-zero constant, we  assume $\|v\|_{\lts}=1$. Then
\begin{align*}
\cbe(\gamma)+o(1)&=\frac{\|u_k\|_{\ga}^2-\sg\|u_k\|_{\lts}^2}{\di(u_k,\mhs)^2}\\
&=\frac{\|v\|_{\ga}^2-\sg\|v\|_{\lts}^2+\|f_k\|_{\ga}^2-\sg((1-\|f_k\|_{\lts}^{2^\star})^{\frac{2}{2^{\star}}}-1)}{\|v\|_{\ga}^2+\|f_k\|_{\ga}^2-\sg\m(v)}\\
&=: \frac{A+B}{C+D}.
\end{align*}
Now if 
\begin{align*}
\|v\|_{\ga}^2-\sg\m(v)\neq 0.
\end{align*}
Then by definition 
\begin{align*}
\cbe(\ga)\leq \frac{\|v\|_{\ga}^2-\sg\|v\|_{\lts}^2}{\|v\|_{\ga}^2-\sg\m(v)} =:\frac{A}{C}.
\end{align*}
Hence by an algebraic manipulation,
\begin{align}\label{fk0}
\frac{B}{D} \coloneqq \lim_{k \rightarrow \infty}\frac{\|f_k\|_{\ga}^2-\sg((1-\|f_k\|_{\lts}^{2^{\star}})^{\frac{2}{2^{\star}}}-1)}{\|f_k\|_{\ga}^2}\leq \cbe(\ga).
\end{align}

If $\|v\|_{\ga}^2-\sg\m(v)=0$, then using $\|v\|_{\ga}^2-\sg \|v\|_{\lts}^2\geq 0$, we see that \eqref{fk0} holds as well.
 Therefore, using the strict $2$-peak level, for large $k$, 
\begin{align*}
1-\frac{\sg \left((1+\|f_k\|_{\lts}^{2^\star})^{\frac{2}{2^{\star}}}-1\right)}{\sg[f_k]\|f_k\|_{\lts}^2}< 2-2^{\frac{2}{2^{\star}}},
\end{align*}
where we denote $S_{\ga}[f_k] = \|f_k\|^2_{\ga}/\|f_k\|_{\lts}^2$,  the Hardy-Sobolev ratio. Since $\eta\to \frac{(1+\eta^p)^{\frac{2}{p}}-1}{\eta^2}$ is increasing in $(0,\infty)$ for $p>2$,
and $\|f_k\|_{\lts}\leq 1$, we get
\begin{align*}
1-\frac{\sg(2^{\frac{2}{2^{\star}}}-1)}{\sg[f_k]}< 2-2^{\frac{2}{2^{\star}}}
\end{align*}
which gives the  contradiction $\sg[f_k]< \sg$.

\medskip

\noindent
{\bf{Subcase 1(b):}} Now we assume 
\begin{align*}
\lim_{k \rightarrow \infty}\|f_k\|_{\lts}\geq \|v\|_{\lts}.
\end{align*}
We define the following quantities:
\begin{align*}
\tilde{u}_k=\frac{u_k}{\|f_k\|_{\lts}},\  \ \tilde{v}_k=\frac{v}{\|f_k\|_{\lts}}, \ \ \tilde{f}_k=\frac{f_k}{\|f_k\|_{\lts}}.
\end{align*}
Again by Br\'ezis-Lieb lemma we have,
\begin{align*}
\cbe(\ga)+o(1)&=\frac{\|\tilde{u}_k\|_{\gamma}^2-\sg \|\tilde{u}_k\|_{\lts}^2}{\|\tilde{u}_k\|_{\gamma}^2-\sg\m(\tilde{u}_k)}\\
&=\frac{\|\tilde{v}_k\|_{\ga}^2+\|\tilde{f}_k\|_{\ga}^2-\sg(1+\|\tilde{v}_k\|_{\lts}^{2^{\star}})^{\frac{2}{2^{\star}}}}{\|\tilde{v}_k\|_{\ga}^2+\|\tilde{f}_k\|_{\ga}^2-\sg\m(\tilde{v}_k)}\\
&=\frac{\|\tilde{v}_k\|_{\ga}^2-\sg+\|\tilde{f}_k\|_{\ga}^2-\sg[(1+\|\tilde{v}_k\|_{\lts}^{2^{\star}})^{\frac{2}{2^{\star}}}-1]}{\|\tilde{v}_k\|_{\ga}^2+\|\tilde{f}_k\|_{\ga}^2-\sg\m(\tilde{v}_k)}.
\end{align*}
Now using the relation \eqref{case1}, we have 
$\|\tilde{f}_k\|_{\ga}^2-\sg \m(\tilde{v}_k) \leq \|\tilde{f}_k\|_{\ga}^2-\sg \m(\tilde{f}_k)+o(1)$,
hence 
\begin{align*}
\cbe(\ga)+o(1)\leq \frac{\|\tilde{f}_k\|_{\ga}^2-\sg+\|\tilde{v}_k\|_{\ga}^2-\sg[(1+\|\tilde{v}_k\|_{\lts}^{2^{\star}})^{\frac{2}{2^{\star}}}-1]}{\|\tilde{f}_k\|_{\ga}^2-\sg\m(\tilde{f}_k)+\|\tilde{v}_k\|_{\ga}^2}.
\end{align*}
Since
\begin{align*}
\cbe(\ga)\leq\frac{\|\tilde{g}_k\|_{\ga}^2-\sg}{\|\tilde{g}_k\|_{\ga}^2-\sg\m(\tilde{g}_k)}.
\end{align*}
if the denominator does not vanishes, proceeding as in {\bf Sub-case 1(a)},  we get a similar contradiction $\sg[v]<\sg$.

\medskip

\noindent
{\bf{Case 2:}}
\begin{align}\label{case1}
\m(v)\leq \m(f_k)+o(1).
\end{align}
It follows similarly by interchanging the role of $v$ and $f_k$. Hence we get the strong convergence. Actually, in this case, by scaling we can reduce to the first case.

\medskip

Finally, for $\gamma \in\left[\gamma_0,\frac{(N-2)^2}{4}\right)$ we have $\cbe(\gamma)<\Lambda (\gamma)$ which ensures $v\not\in \mhs$ and hence is a minimizer for $\cbe (\gamma)$. This completes the proof of the theorem.

  \end{proof}

\medskip

\section{Appendix} \label{appendix}
\subsection{Interaction estimates between the bubbles}

\begin{Lem}[Interaction estimates]\label{BI}
	Let $N\geq 3,$  fix non-negative real numbers $\eta_1,\,\eta_2$ such that  $\eta_1+\eta_2=2^{\star},$ and let $ \lambda\in(0,1].$ Then 
	\be\nonumber
	\int_{\rn} U_{\gamma}^{\eta_1} U_{\gamma}[\lambda]^{\eta_2}\,{\rm d}x =
	\begin{cases}
	\approx_{N,\gamma} \lambda^{\var\min\{\eta_1,\,\eta_2\}} \ \ \ \ \  \ \text{if }|\eta_1-\eta_2|>0,\\
	 \approx_{N,\gamma}	\lambda^{\var\frac{N}{N-2}}\log(\lambda^{-1})\,\, \ \ \ \text{if }\eta_1=\eta_2=\frac{2^{\star}}{2}.
	\end{cases}
	\ee
\end{Lem}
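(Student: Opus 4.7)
The plan is to pull the integral back to the cylinder via the isometric lifting, where the two bubbles differ only by a translation, and then exploit the explicit exponential tails of $\hat{U}_{\gamma}$.

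First, I would use the change of variables $x = e^{-t}\sigma$, with Jacobian $|dx| = e^{-Nt}\, dt\, d\sigma$. Since $U_{\gamma}(e^{-t}\sigma) = e^{\frac{N-2}{2}t}\hat{U}_{\gamma}(t)$ and $U_{\gamma}[\lambda](e^{-t}\sigma) = e^{\frac{N-2}{2}t}\hat{U}_{\gamma}(t+s)$ where $s = -\log\lambda \geq 0$, the homogeneity gain from $\eta_1+\eta_2 = 2^{\star}=\frac{2N}{N-2}$ is exactly $e^{Nt}$, which cancels the Jacobian. This yields
\begin{equation*}
\int_{\R^N} U_{\gamma}^{\eta_1}U_{\gamma}[\lambda]^{\eta_2}\, dx \;=\; |\mathbb{S}^{N-1}|\int_{\R} \hat{U}_{\gamma}(t)^{\eta_1}\,\hat{U}_{\gamma}(t+s)^{\eta_2}\, dt.
\end{equation*}

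Next, from the explicit formula $\hat{U}_{\gamma}(t) = 2^{-\frac{N-2}{2}}\cgn\sech^{\frac{N-2}{2}}(\theta t)$ together with $\theta\cdot\frac{N-2}{2}=\var$, I get the two-sided asymptotics $\hat{U}_{\gamma}(t) \asymp e^{-\var|t|}$ on $\R$, with constants depending only on $N,\gamma$. I would then split $\R$ into the three natural regions dictated by the signs of $t$ and $t+s$ (with $s\geq 0$):
\begin{equation*}
\{t > 0\}, \qquad \{-s < t < 0\}, \qquad \{t < -s\}.
\end{equation*}
On $\{t>0\}$ the integrand is comparable to $e^{-\var\eta_2 s}e^{-\var 2^{\star}t}$, contributing $\asymp e^{-\var\eta_2 s}$; on $\{t<-s\}$ the symmetric estimate gives $\asymp e^{-\var\eta_1 s}$; on the middle region the integrand is comparable to $e^{-\var\eta_2 s}e^{(\eta_1-\eta_2)\var t}$.

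Evaluating the middle integral is where the dichotomy appears: if $\eta_1\neq\eta_2$, the exponential $e^{(\eta_1-\eta_2)\var t}$ integrates to a bounded multiple of the value at the dominant endpoint, giving an overall contribution $\asymp e^{-\var\min\{\eta_1,\eta_2\}s}$, which absorbs the boundary-region contributions (since the larger of $\eta_1,\eta_2$ gives a strictly smaller term); if $\eta_1=\eta_2=2^{\star}/2$, the exponential is identically $1$ and the middle region contributes $s\cdot e^{-\var 2^{\star}s/2} = \log(\lambda^{-1})\cdot\lambda^{\var\frac{N}{N-2}}$, which now dominates the boundary contributions. Converting back $s=-\log\lambda$, this gives the two cases stated.

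The main obstacle is bookkeeping: pinning down the constants to obtain genuine two-sided bounds uniform in $\lambda\in(0,1]$ rather than only the leading order asymptotics, and keeping track of which region dominates in each subcase. The only analytic input beyond elementary calculus is the uniform $\sech$-tail estimate, which follows directly from the explicit form of $\hat{U}_{\gamma}$, so no further structure of the extremizers is needed.
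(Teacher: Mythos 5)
Your proof is correct, but it reaches the result by a genuinely different route than the paper. The paper estimates the integral directly in $\R^N$: it splits the domain at $|x| = \lambda^{-1}$ and uses the two-scale algebraic asymptotics $U_\gamma(x) \approx |x|^{-\beta_-(\gamma)}$ near the origin and $|x|^{-\beta_+(\gamma)}$ at infinity (together with the corresponding scaled versions for $U_\gamma[\lambda]$), and then evaluates the resulting power-law integrals. You instead lift the whole computation to the cylinder, where the two-scale algebraic decay collapses into a single exponential tail $\hat U_\gamma(t) \asymp e^{-\var|t|}$, and the two bubbles differ only by a translation by $s=\log(\lambda^{-1})$. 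This makes the region decomposition $\{t>0\}$, $\{-s<t<0\}$, $\{t<-s\}$ canonical, and the logarithmic correction in the case $\eta_1 = \eta_2 = 2^\star/2$ simply becomes the length $s$ of the middle interval where the integrand is constant in size. In fact the paper also states the cylindrical form of this estimate (Remark~\ref{bub-int-cyl}) but obtains it \emph{from} the Euclidean computation; you have reversed the order and derived the cylindrical estimate directly, which is arguably tidier since the exponential tails avoid juggling $\beta_\pm(\gamma)$. The one thing worth spelling out (as your final paragraph flags) is that the stated two-sided comparability $\approx$ degenerates as $\lambda\to 1$ in the $\eta_1=\eta_2$ case (the right-hand side vanishes while the left-hand side tends to $1$), so the estimate is to be read for $\lambda$ bounded away from $1$, or asymptotically as $\lambda\to 0$ — but this caveat is present in the paper's proof as well and is harmless for its application to the two-peak analysis where $s\to\infty$.
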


\begin{proof}
	On $B(0,\lambda^{-1}),\,|\lambda x|<1,$ and so
	\be\nonumber
	U_{\gamma}[\lambda](x)\approx_{N,\gamma} \lambda^{\varepsilon}|x|^{-\beta_{-}(\gamma)}.
	\ee
	On $B(0,\lambda^{-1})^c,$ $\frac{1}{|x|}\leq \lambda<1$ and therefore
	\begin{eqnarray}
	U_{\gamma}(x) &\approx_{N,\gamma}& \frac{1}{|x|^{\beta_{-}(\gamma)}\left(1+|x|^{2\varepsilon \frac{2}{N-2}}\right)^{\frac{N-2}{2}}}\nonumber\\
	&\approx_{N,\gamma}& \frac{1}{|x|^{\beta_{-}(\gamma)+2\varepsilon}\left(1+\frac{1}{|x|^{2\varepsilon \frac{2}{N-2}}}\right)^{\frac{N-2}{2}}}\approx_{N,\gamma}|x|^{-\beta_{+}(\gamma)}.\nonumber
	\end{eqnarray}
	On the other hand, on $B(0,\lambda^{-1})^c,$
	\begin{eqnarray}
	U_{\gamma}[\lambda](x) &\approx_{N,\gamma}& \frac{\lambda^{\frac{N-2}{2}}}{|\la x|^{\beta_{-}(\gamma)}\left(1+|\la x|^{2\varepsilon\frac{2}{N-2}}\right)^{\frac{N-2}{2}}}\nonumber\\
	&\approx_{N,\gamma}&\lambda^{\frac{N-2}{2}-\beta_{+}(\gamma)}|x|^{-\beta_{+}(\gamma)}\nonumber\\
	&\approx_{N,\gamma}&\la^{-\varepsilon}|x|^{-\beta_{+}(\gamma)}.\nonumber
	\end{eqnarray}
	Since we have $\eta_1+\eta_2 = 2^{\star},$ we get
	\begin{eqnarray}
	&\,&\int_{\R^{N}} U_{\gamma}(x)^{\eta_1} U_{\gamma}[\lambda](x)^{\eta_2}\,{\rm d}x =\int_{B(0,\la^{-1})}U_{\gamma}(x)^{\eta_1} U_{\gamma}[\lambda](x)^{\eta_2}\,{\rm d}x\nonumber\\
	&\,&\qquad\qquad\qquad\qquad\qquad\qquad +\int_{B(0,\lambda^{-1})^c}U_{\gamma}(x)^{\eta_1} U_{\gamma}[\lambda](x)^{\eta_2}\,{\rm d}x\nonumber\\
	&\approx_{N,\ga}& \int_{t=0}^{\lambda^{-1}}\frac{\lambda^{\varepsilon\eta_2}t^{N-1}}{t^{\beta_{-}(\gamma)\frac{2N}{N-2}}\left(1+t^{2\varepsilon\frac{2}{N-2}}\right)^{\frac{N-2}{2}\eta_1}}\,{\rm d}t+\int_{t=\la^{-1}}^{\infty}t^{N-1-\beta_{+}(\gamma)\frac{2N}{(N-2)}}\lambda^{-\var\eta_2}\,{\rm d}t\nonumber\\
	&\approx_{N,\gamma}& \lambda^{\varepsilon \eta_2}\int_{t=1}^{\lambda^{-1}}t^{N-1-\beta_{-}(\gamma)2^{\star}-2\varepsilon\eta_1}\,{\rm d}t + \lambda^{\varepsilon\eta_1}+\lambda^{\varepsilon \eta_2}.\nonumber
	\end{eqnarray}
	Thus whenever $\eta_1=\frac{N-\beta_{-}(\gamma)2^{\star}}{2\var}=\frac{N}{N-2}=\frac{2^{\star}}{2}$,  $\log$ term will appear.
	
	\begin{itemize}
		\item If $\eta_1\geq \eta_2+\delta'$ (for some $\delta'>0$), the above expression becomes comparable to $\lambda^{\var\eta_2}$.
		\item If $\eta_2\geq \eta_1+\delta'$ (for some $\delta'>0$), the above expression becomes comparable to $\la^{\var\eta_1}$.
		\item If $\eta_1=\eta_2=\frac{2^{\star}}{2}$, the above expression becomes comparable to $\lambda^{\var\frac{N}{N-2}}\log (\lambda^{-1})$.
	\end{itemize}
\end{proof}

\begin{Rem}\label{bub-int-cyl}
	In cylindrical co-ordinates $\hat{U}_{\ga}(t,\sigma) \coloneqq e^{-\frac{N-2}{2}t}U_{\ga}(e^{-t}\sigma)$ the interaction estimate translates to the following: for every $s >0$ and $\eta_1,\,\eta_2\in\R^{+}$ with $\eta_1+\eta_2=2^{\star}$,
		\begin{align*}
	\int_{\R\times\mathbb{S}^{N-1}}\hat{U}_{\gamma}^{\eta_1}\hat{U}_{\gamma}[s]^{\eta_2}\,{\rm d}t\,{\rm d}\sigma = 
	\begin{cases}
	\approx_{N,\gamma}	e^{-s\var\min\{\eta_1,\,\eta_2\}},\quad\text{if }|\eta_1-\eta_2|>0\\
	\approx_{N,\gamma}	s e^{-s\var \frac{N}{N-2}}, \ \ \qquad\quad\text{if }\eta_1=\eta_2=\frac{2^{\star}}{2},
	\end{cases}
		\end{align*}
\end{Rem}

\subsection{The equation satisfied by $h_{\alpha}$}
\begin{Lem}\label{O.D.E}
Let $h_{\alpha}(t)=\sech^{\frac{N-2}{2}}\alpha t$ then,
\begin{align*}
-h''_{\alpha}(t)+ \left( \frac{N-2}{2}\right)^2\alpha^2 h_{\alpha}(t)=\ca h_{\alpha}^{2^{\star}-1}(t)
\end{align*}
 where $\ca= \left[\left(\frac{N-2}{2}\right)^2+\frac{N-2}{2}\right]\alpha^2$.

\end{Lem}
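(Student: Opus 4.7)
The statement is a direct ODE verification, so the plan is simply to compute $h_\alpha'$ and $h_\alpha''$ explicitly and then recognize the power of $\sech$ that appears as $h_\alpha^{2^\star - 1}$. To keep the algebra compact I would set $m = \frac{N-2}{2}$, so that $h_\alpha(t) = \sech^m(\alpha t)$ and $\mathcal{C}_\alpha = (m^2 + m)\alpha^2$.

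First I would differentiate once, using $\frac{d}{dt}\sech(\alpha t) = -\alpha \sech(\alpha t)\tanh(\alpha t)$, to get
\begin{equation*}
h_\alpha'(t) = -m\alpha\, h_\alpha(t)\tanh(\alpha t).
\end{equation*}
A second differentiation, via the product rule together with $\frac{d}{dt}\tanh(\alpha t) = \alpha\sech^2(\alpha t)$, then yields
\begin{equation*}
h_\alpha''(t) = m^2\alpha^2 h_\alpha(t)\tanh^2(\alpha t) - m\alpha^2 h_\alpha(t)\sech^2(\alpha t).
\end{equation*}
Using $\tanh^2 = 1 - \sech^2$, I would rewrite this as
\begin{equation*}
h_\alpha''(t) = m^2\alpha^2 h_\alpha(t) - (m^2 + m)\alpha^2 h_\alpha(t)\sech^2(\alpha t),
\end{equation*}
which rearranges to
\begin{equation*}
-h_\alpha''(t) + m^2\alpha^2 h_\alpha(t) = (m^2 + m)\alpha^2\, h_\alpha(t)\sech^2(\alpha t).
\end{equation*}

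The final step is to identify the right-hand side as $\mathcal{C}_\alpha h_\alpha^{2^\star - 1}(t)$. Since $h_\alpha(t)\sech^2(\alpha t) = \sech^{m+2}(\alpha t)$ and $\frac{m+2}{m} = \frac{N+2}{N-2} = 2^\star - 1$, one has $h_\alpha \sech^2(\alpha t) = h_\alpha^{2^\star - 1}(t)$, giving the claimed identity with the stated constant $\mathcal{C}_\alpha$. There is no conceptual obstacle here; the only thing worth double-checking is the exponent arithmetic $\frac{m+2}{m} = 2^\star - 1$, which holds precisely because $m = (N-2)/2$.
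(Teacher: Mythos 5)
Your proof is correct and follows the same route as the paper's: differentiate twice, use $\tanh^2 = 1 - \sech^2$, and identify $h_\alpha\sech^2(\alpha t)$ as $h_\alpha^{2^\star-1}$. The only cosmetic difference is your shorthand $m=(N-2)/2$ and your explicit check that $\frac{m+2}{m}=2^\star-1$, which the paper leaves implicit.
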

\begin{proof}
We compute the first and second derivative of $h_{\alpha}(t)=\sech^{\frac{N-2}{2}}\alpha t$. Then,
\begin{align*}
	h_{\alpha}'(t)&=-\frac{N-2}{2}\alpha \sech^{\frac{N-2}{2}}\alpha t \tanh\alpha t=-\frac{N-2}{2}\alpha h_{\alpha}(t)\tanh\alpha t. \\
	h_{\alpha}''(t)&= -\frac{N-2}{2}\alpha h'_{\alpha}(t)\tanh\alpha t -\frac{N-2}{2}\alpha^2 h_{\alpha}(t)\sech^2\alpha t\\
&=\left(\frac{N-2}{2}\right)^2 \alpha^2 h_{\alpha}(t)\tanh^2\alpha t-\frac{N-2}{2}\alpha^2 h_{\alpha}(t)\sech^2\alpha t\\
&=\left(\frac{N-2}{2}\right)^2 \alpha^2 h_{\alpha}(t)-\left[\left(\frac{N-2}{2}\right)^2+\frac{N-2}{2}\right]\alpha^2 h_{\alpha}(t)h_{\alpha}^{2^{\star}-2}(t). 
\end{align*}
Therefore,
$$-h''_{\alpha}(t)+ \left( \frac{N-2}{2}\right)^2\alpha^2 h_{\alpha}(t)=\ca h_{\alpha}^{2^{\star}-2}(t)h_{\alpha}(t).$$
\end{proof}

\subsection{Spherical Harmonics Decomposition} 
\begin{Lem}
Let $f\in H^1(\R \times \sn)$. Then there exists $\{f_{ij}\}$ where $f_{ij}\in H^1(\R)$ 
\begin{align*}
f(t,\sigma)=\sum_{i=0}^{\infty}\sum_{j=1}^{D(i)}f_{ij}(t)Y_j^i(\sigma) \ \ \ \mbox{in} \ \ H^1(\R \times \sn),
\end{align*} 
where $D(i)$ is the dimension of the space of homogeneous harmonic polynomial of degree $i$ and $\{Y_j^i\}_{j = 1}^{D(i)}$ is an $\lt(\sn)$ orthonormal basis.   
\end{Lem}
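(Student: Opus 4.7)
The plan is to use the spectral decomposition of the Laplace--Beltrami operator on $\sn$. Recall that $\{Y_j^i\}$ diagonalizes $-\Delta_{\sn}$ with eigenvalues $\lambda_i = i(i+N-2)$ and forms a complete orthonormal system in $L^2(\sn)$. For $f \in H^1(\R\times\sn)$, I would define the candidate coefficients
\begin{align*}
f_{ij}(t) \coloneqq \int_{\sn} f(t,\sigma)\, Y_j^i(\sigma)\, \ds,
\end{align*}
which is well defined for a.e.\ $t$ by Fubini, and I would set $f_M(t,\sigma) \coloneqq \sum_{i=0}^{M}\sum_{j=1}^{D(i)} f_{ij}(t)Y_j^i(\sigma)$. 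The goal is then to show $f_{ij} \in H^1(\R)$ and $f_M \to f$ in $H^1(\R\times\sn)$.

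First, I would work by density. Pick $g_n \in C_c^\infty(\R\times\sn)$ with $g_n \to f$ in $H^1(\R\times\sn)$. For smooth $g_n$, differentiation under the integral sign directly gives $g_{n,ij} \in C_c^\infty(\R)$ with $g_{n,ij}'(t) = \int_{\sn} (\partial_t g_n)(t,\sigma)Y_j^i(\sigma)\, \ds$. Testing against an arbitrary $\varphi \in C_c^\infty(\R)$ and using Fubini together with integration by parts in $t$ shows that the identity persists in the distributional sense when $g_n$ is replaced by $f$, so $f_{ij} \in H^1(\R)$ with $f_{ij}'(t) = \int_{\sn} (\partial_t f)(t,\sigma)Y_j^i(\sigma)\, \ds$. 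Equivalently, the coefficient map $f \mapsto f_{ij}$ is a bounded linear operator $H^1(\R\times\sn) \to H^1(\R)$ intertwining $\partial_t$ with $\partial_t$.

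Next, I would combine Parseval on $\sn$ with Fubini's theorem to obtain the three orthogonality identities
\begin{align*}
\|f\|_{L^2(\R\times\sn)}^2 &= \sum_{i,j}\|f_{ij}\|_{L^2(\R)}^2, \\
\|\partial_t f\|_{L^2(\R\times\sn)}^2 &= \sum_{i,j}\|f_{ij}'\|_{L^2(\R)}^2, \\
\|\nabla_{\sn} f\|_{L^2(\R\times\sn)}^2 &= \sum_{i,j}\lambda_i\|f_{ij}\|_{L^2(\R)}^2,
\end{align*}
the last of these following from integration by parts on $\sn$ together with $-\Delta_{\sn} Y_j^i = \lambda_i Y_j^i$. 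Summing them yields
\begin{align*}
\|f - f_M\|_{H^1(\R\times\sn)}^2 = \sum_{i > M}\sum_{j=1}^{D(i)}\Bigl[\|f_{ij}'\|_{L^2(\R)}^2 + (1+\lambda_i)\|f_{ij}\|_{L^2(\R)}^2\Bigr] \to 0
\end{align*}
as $M \to \infty$, which simultaneously delivers convergence of the partial sums in $H^1(\R\times\sn)$ and the required summability of the coefficients.

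There is no real obstacle here beyond care in interchanging the $t$-derivative with the spherical integral, a standard duality-with-test-functions argument. The only inputs are the classical spectral theory of $-\Delta_{\sn}$, Fubini's theorem, and the density of $C_c^{\infty}(\R\times\sn)$ in $H^1(\R\times\sn)$ with respect to the norm $\|\cdot\|_{H^1(\R\times\sn)}$ introduced in Section~\ref{isometric lifting}.
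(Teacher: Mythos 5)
Your argument is correct, and it is essentially the textbook proof. The paper itself does not prove this lemma at all -- it simply cites standard references (M\"uller, Stein--Weiss, and a survey on spherical harmonics) -- so there is no ``paper's proof'' to compare against; your self-contained density-plus-Parseval argument fills that gap cleanly. One minor point of bookkeeping: in this paper the $H^1(\R\times\sn)$ norm is, by the convention of Section~\ref{isometric lifting}, the one with zeroth-order weight $\var^2 = \tfrac{(N-2)^2}{4}$ (not $1$), so the tail estimate should read
\begin{align*}
\|f - f_M\|_{H^1(\R\times\sn)}^2 = \sum_{i > M}\sum_{j=1}^{D(i)}\left[\|f_{ij}'\|_{L^2(\R)}^2 + \left(\tfrac{(N-2)^2}{4}+\lambda_i\right)\|f_{ij}\|_{L^2(\R)}^2\right],
\end{align*}
consistent with Corollary~\ref{parseval's identity}; this does not affect the convergence conclusion. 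Also worth making explicit is that $f(t,\cdot)\in H^1(\sn)$ for a.e.\ $t$ (Fubini applied to $|\nabla_{\sn}f|^2\in L^1$), which is what justifies the $\sn$-integration by parts in your third identity; otherwise the steps you give are complete.
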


\noindent
{\bf{Plancharel-Parseval's Identity:}}
\begin{Cor}\label{parseval's identity}
The following equalities hold
\begin{align*}
 \|f\|_{H^1(\R\times \sn)}^2&=\sum_{i=0}^{\infty}\sum_{j=1}^{D(i)}\intc\left(|f'_{ij}|^2 +i(i+N-2)|f_{ij}|^2+\frac{(N-2)^2}{4}f_{ij}^2\right)(Y_j^i)^2\ {\rm d}t\,\ds, \\ 
\|f\|_{L^2(\R\times \sn)}^2
&=\sum_{i=0}^{\infty}\sum_{j=1}^{D(i)}\intR|f_{ij}|^2\ {\rm d}t.
\end{align*}
\end{Cor}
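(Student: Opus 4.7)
The plan is to decompose $f$ in the spherical-harmonic basis and exploit both the $L^2(\sn)$-orthonormality of $\{Y_j^i\}$ and the Laplace--Beltrami eigenvalue identity $-\Delta_{\sn}Y_j^i = i(i+N-2)Y_j^i$. Since the $\{Y_j^i\}$ form an $L^2(\sn)$ orthonormal basis, $\int_{\sn}(Y_j^i)^2\,\ds = 1$, so each summand on the right-hand side equals the corresponding integral over $\R$ alone; the $(Y_j^i)^2$ factor is thus a cosmetic feature that disappears after integrating out $\sigma$.

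For the $L^2$ identity, I would fix (a.e.) $t\in\R$, apply classical spherical Parseval to $f(t,\cdot)\in L^2(\sn)$ to obtain $\int_{\sn}|f(t,\sigma)|^2\,\ds = \sum_{i,j}|f_{ij}(t)|^2$, and then integrate in $t$. For the $H^1$ identity I would treat the three pieces of $\|\cdot\|_{H^1(\R\times\sn)}^2$ separately. The $t$-derivative commutes with the $L^2(\sn)$-projection onto $Y_j^i$, so the coefficients of $\partial_t f$ are precisely $f'_{ij}$, and Parseval gives
\begin{equation*}
\int_{\sn}|\partial_t f(t,\sigma)|^2\,\ds = \sum_{i,j}|f'_{ij}(t)|^2.
\end{equation*}
For the spherical gradient I would integrate by parts on the closed manifold $\sn$ and use the eigenvalue relation to get
\begin{equation*}
\int_{\sn}|\nabla_{\sn} f(t,\sigma)|^2\,\ds = -\int_{\sn} f\,\Delta_{\sn} f\,\ds = \sum_{i,j} i(i+N-2)|f_{ij}(t)|^2.
\end{equation*}
The constant Hardy-type term is immediate from the $L^2$ identity. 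Summing these three pieces and integrating in $t$ delivers the first equality.

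The only subtle point, not really a genuine obstacle, is the justification of exchanging the infinite sum with the integral over $t$. I would first verify both identities for $f\in C_c^\infty(\R\times\sn)$, where after projecting onto $\bigoplus_{i\le M}\mathrm{span}\{Y_j^i\}$ the sums become finite and everything is classical. Passing to general $f\in H^1(\R\times\sn)$ is then handled by density, using monotone convergence on the nonnegative partial sums
\begin{equation*}
S_M(t) := \sum_{i=0}^{M}\sum_{j=1}^{D(i)}\Bigl(|f'_{ij}(t)|^2 + i(i+N-2)|f_{ij}(t)|^2 + \tfrac{(N-2)^2}{4}|f_{ij}(t)|^2\Bigr),
\end{equation*}
which are monotone increasing in $M$, together with Fatou's lemma to identify the limit with the full $H^1$ norm. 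The $L^2$ identity is handled analogously.
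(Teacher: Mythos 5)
Your argument is correct and is the classical one: it is exactly the standard spherical-harmonics Parseval argument (orthonormality of $\{Y_j^i\}$ in $L^2(\sn)$, the eigenvalue relation $-\Delta_{\sn}Y_j^i=i(i+N-2)Y_j^i$ via integration by parts, commuting $\partial_t$ with the projections, and a density/monotone-convergence step to justify the infinite sum), and your observation that the $(Y_j^i)^2$ factor integrates to $1$ is the right reading of the statement. The paper does not supply its own proof of this corollary; it simply refers the reader to standard references, and your write-up coincides with the argument those references give, so there is nothing to reconcile.
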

Interested reader may consult the following references \cite{Muller-1966,SW-1971,SB-2015} for a proof.  
\subsection{Improved Hardy's inequality}
\begin{Lem}\label{Imp-Har}
	For $\hat{u}\in \left(\bigoplus_{i=0}^{k_0}\bigoplus_{j=0}^{D(i)}H^1(\R)Y^i_j\right)^{\perp}$ we have
	\begin{equation}\label{IHI}
		\left(\frac{(N-2)^2}{4}+k_0(N-2+k_0)\right) \|\hat{u}\|_{\lt(\R\times\sn)}^2 \leq  \| \hat u\|_{H^1(\R\times\sn)}^2.
	\end{equation}
\end{Lem}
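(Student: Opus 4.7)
My plan is to expand $\hat u$ in the spherical harmonic basis on $\sn$, apply the Plancherel--Parseval identities of Corollary~\ref{parseval's identity}, and invoke the standard eigenvalue bound for $-\Delta_{\sn}$ on the surviving angular modes.

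First I will write
\begin{align*}
\hat u(t,\sigma) \;=\; \sum_{i=0}^{\infty}\sum_{j=1}^{D(i)} u_{ij}(t)\, Y_j^i(\sigma),\qquad u_{ij}\in H^1(\R),
\end{align*}
and translate the hypothesis into a statement about these coefficients. Since $\hat u$ lies in the orthogonal complement of $\bigoplus_{i=0}^{k_0}\bigoplus_{j=1}^{D(i)} H^1(\R)\, Y_j^i$, testing against $v_{ij}(t)Y_j^i$ with $v_{ij}\in H^1(\R)$ and using the $L^2(\sn)$-orthonormality of $\{Y_j^i\}$ forces $u_{ij}\equiv 0$ for all $0\leq i\leq k_0$. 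Thus only modes with angular degree $i\geq k_0+1$ survive, and on each such mode the eigenvalue of $-\Delta_{\sn}$ satisfies
\begin{align*}
i(i+N-2) \,\geq\, (k_0+1)(k_0+N-1) \,\geq\, k_0(k_0+N-2).
\end{align*}

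Next I will apply Corollary~\ref{parseval's identity}, which simultaneously diagonalizes $\|\cdot\|_{H^1(\R\times\sn)}^2$ and $\|\cdot\|_{L^2(\R\times\sn)}^2$ in the basis $\{Y_j^i\}$. Dropping the nonnegative radial kinetic contribution $\int_\R|u_{ij}'|^2\,{\rm d}t$ from each surviving summand of $\|\hat u\|_{H^1(\R\times\sn)}^2$, and then replacing $i(i+N-2)$ by the uniform lower bound $k_0(k_0+N-2)$, I obtain
\begin{align*}
\|\hat u\|_{H^1(\R\times\sn)}^2 \,\geq\, \left(\frac{(N-2)^2}{4}+k_0(k_0+N-2)\right)\sum_{i\geq k_0+1}\sum_{j=1}^{D(i)}\int_\R |u_{ij}|^2\,{\rm d}t,
\end{align*}
and the right-hand sum is precisely $\|\hat u\|_{L^2(\R\times\sn)}^2$ by the second identity of the same corollary. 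This is exactly \eqref{IHI}.

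I do not expect any genuine obstacle: the argument reduces to a term-by-term comparison once the spherical harmonic decomposition is in place, and the stated constant $k_0(k_0+N-2)$ is in fact slightly weaker than the sharp threshold $(k_0+1)(k_0+N-1)$ available from the surviving modes, so the comparison is safely on the right side of the inequality. The only mild point worth noting is that the orthogonality in the hypothesis, the two Parseval identities, and the angular Dirichlet bound are all compatible because $\{Y_j^i\}$ simultaneously diagonalizes every relevant quadratic form.
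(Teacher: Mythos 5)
Your proof is correct and follows the same route as the paper: expand in spherical harmonics on the cylinder, note that orthogonality to the low modes kills all degrees $i\leq k_0$, diagonalize both quadratic forms via the Plancherel--Parseval identity (Corollary~\ref{parseval's identity}), and use the monotone lower bound $i(i+N-2)\geq k_0(k_0+N-2)$ on the surviving modes. The paper's proof states this in two lines; you merely spell out the intermediate steps (including the correct observation that the stated constant is a slight under-sharp version of $(k_0+1)(k_0+N-1)$), so there is no substantive difference.
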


\begin{proof}
	For $\hat{u}\in \left(\bigoplus_{i=0}^{k_0}\bigoplus_{j=0}^{D(i)}H^1(\R)Y^i_j\right)^{\perp}$, we write
	\be\label{ES:4}
	\hat{u}(t,\sigma) = \sum_{i=k_0+1}^{\infty}\sum_{j=0}^{D(i)} u_{ij}(t)Y^i_j(\sigma).
	\ee
Using the Parseval's identity (Corollary~\ref{parseval's identity}), we conclude 	
	\begin{align*}
	&\left\{\frac{(N-2)^2}{4}+k_0(N-2+k_0)\right\} \int_{\R\times\sn}\hat{u}^2\,{\rm d}t\,{\rm d}\sigma\\
		&\qquad\qquad\leq \int_{\R\times\sn} \left\{\left|\partial_t \hat u\right|^2 + |\nabla_{\sn}\hat{u}|^2+\frac{(N-2)^2}{4}\hat{u}^2\right\}{\rm d}t\,{\rm d}\sigma.
	\end{align*}
   This completes the proof. 
\end{proof}
\medskip

\noindent
{\bf Acknowledgement.}  S. Chakraborty acknowledges the support of the Tata Institute of Fundamental Research, Bengaluru for the Institute Postdoctoral Fellowship. D. Karmakar acknowledges the support of the Department of Atomic Energy, Government of India, under project no. 12-R\&D-TFR-5.01-0520. 
 We thank Tian Xingliang for kindly pointing out the reference \cite{Wei25} to us and providing us a copy of their work.

\medskip

\noindent
{\bf Competing interests.} The authors have no competing interests to declare that are relevant to the content of this article.

\medskip

\noindent
{\bf Data availability statement.} Data sharing not applicable to this article as no datasets were generated or analysed during the current study.

\bibliographystyle{alpha}
\bibliography{HSBE.bib}

\end{document}